\newcommand{\CC}{\mathbb{C}}
\DeclareMathOperator{\rank}{R}
\DeclareMathOperator{\borderrank}{\underline{R}}
\DeclareMathOperator{\Tens}{T}
\DeclareMathOperator{\exponent}{\omega}
\DeclarePairedDelimiter\floor{\lfloor}{\rfloor}
\newcommand{\GL}{\mathrm{GL}}
\newcommand{\NN}{\mathbb{N}}
\newcommand{\RR}{\mathbb{R}}
\newcommand{\Oh}{\mathcal{O}}
\newcommand{\tightotimes}{\hspace{-0.05em}\otimes\hspace{-0.05em}}
\newcommand{\GHZ}{\mathrm{GHZ}}
\newcommand{\defin}[1]{\emph{#1}}
\newcommand{\dome}{\textnormal{dome}}
\theoremstyle{plain}
 \newtheorem{theorem}{Theorem}[section]
 \newtheorem*{theorem*}{Theorem}
 \newtheorem{proposition}[theorem]{Proposition}
 \newtheorem{lemma}[theorem]{Lemma}
 \newtheorem{corollary}[theorem]{Corollary}
\theoremstyle{definition}
 \newtheorem*{problem*}{Problem}
 \newtheorem{problem}[theorem]{Problem}
 \newtheorem{remark}[theorem]{Remark}
\begin{document}

\vspace*{1em}
\begin{center}
\Large\textbf{Tensor surgery and tensor rank}\par
\vspace{1em}
\large Matthias Christandl\footnote{QMATH, Department of Mathematical Sciences, University of Copenhagen, Universitetsparken 5, 2100 Copenhagen, Denmark. Email: christandl@math.ku.dk} and Jeroen Zuiddam\footnote{QuSoft, CWI Amsterdam and University of Amsterdam, Science Park 123, 1098 XG Amsterdam, Netherlands. Email: j.zuiddam@cwi.nl}\par
\end{center}
\vspace{0.5em}

\begin{abstract}
We introduce a method for transforming low-order tensors into higher-order tensors and apply it to tensors defined by graphs and hypergraphs. The transformation proceeds according to a surgery-like procedure that splits vertices, creates and absorbs virtual edges and inserts new vertices and edges. We show that tensor surgery is capable of preserving the low rank structure of an initial tensor decomposition and thus allows to prove nontrivial upper bounds on tensor rank, border rank and asymptotic rank of the final tensors. We illustrate our method with a number of examples. Tensor surgery on the triangle graph, which corresponds to the matrix multiplication tensor, leads to nontrivial rank upper bounds for all odd cycle graphs, which correspond to the tensors of iterated matrix multiplication. In the asymptotic setting we obtain upper bounds in terms of the matrix multiplication exponent $\omega$ and the rectangular matrix multiplication parameter $\alpha$. These bounds are optimal if $\omega$ equals two. We also give examples that illustrate that tensor surgery on general graphs might involve the absorption of virtual hyperedges and we provide an example of tensor surgery on a hypergraph. Besides its relevance in algebraic complexity theory, our work has applications in quantum information theory and communication complexity. 
\end{abstract}


\section{Introduction}
This paper introduces a method for proving upper bounds on tensor rank, border rank and asymptotic tensor rank. The method gives particularly clean results when applied to tensors that are defined combinatorially. Let us first illustrate the combinatorial description that we are using and illustrate the method.
\subsection{Illustration}
The most famous example of a tensor that fits into our combinatorial framework (and which plays an important role in this paper) is the two-by-two matrix multiplication tensor, which is the 3-tensor described by the triangle graph $C_3$
\begin{alignat*}{2}
&\Tens_2\Bigl(\begin{minipage}{1.1cm}
\begin{center}
\begin{tikzpicture}[vertex/.style = {circle, fill, black, minimum width = 1.mm, inner sep=0pt}]
    \path[coordinate] (0,0)  coordinate(A)
                ++( 2*1*60+30:0.8cm) coordinate(B)
                ++( 2*2*60+30:0.8cm) coordinate(C);
	\draw [line width=0.2mm]  (A) node [vertex] {} -- (B) node [vertex] {} -- (C) node [vertex] {} -- (A);
\end{tikzpicture}
\end{center}
\end{minipage}\Bigr) \,&=& \sum_{i\in\{0,1\}^3}\hspace{-1ex} (b_{i_1}\tightotimes b_{i_2})\otimes (b_{i_2} \tightotimes b_{i_3})\otimes (b_{i_3} \tightotimes b_{i_1}) \,\in\, (\CC^2\tightotimes\CC^2)^{\otimes 3},
\intertext{where $\{b_0, b_1\}$ is the standard basis of $\CC^2$.
For a space $\CC^{n_1} \otimes \cdots \otimes \CC^{n_k}$ of $k$-tensors we refer to the $\CC^{n_i}$ as the tensor legs.
 Informally, the graph--tensor correspondence is as follows: each vertex of the graph corresponds to a tensor leg and each edge in the graph corresponds to an index to sum over, shared between tensor legs (see Section~\ref{prelim} for a formal definition). By default we view the above tensor as a 3-tensor, but we will sometimes view it as a 6-tensor. 
Another important example is the so-called rank-two unit 3-tensor, which corresponds to the hypergraph on three vertices with a single hyperedge~$\{1,2,3\}$}
&\Tens_2\Bigl(\begin{minipage}{1.1cm}
\begin{center}
\begin{tikzpicture}[vertex/.style = {circle, fill, black, minimum width = 1.mm, inner sep=0pt}]
    \path[coordinate] (0,0)  coordinate(A)
                ++( 2*1*60+30:0.8cm) coordinate(B)
                ++( 2*2*60+30:0.8cm) coordinate(C);
	\draw [line width=0.2mm, rounded corners, fill=black!10]  (A) node [vertex] {} -- (B) node [vertex] {} -- (C) node [vertex] {} -- cycle;
\end{tikzpicture}
\end{center}
\end{minipage}\Bigr) 
\,&=&\sum_{i\in\{0,1\}}\hspace{-1ex} b_{i}\otimes b_{i}\otimes b_{i} \,\in\, \CC^2\otimes \CC^2\otimes \CC^2.
\intertext{In the algebraic complexity theory literature, the two-by-two matrix multiplication tensor is usually denoted by $\langle 2,2,2\rangle$ and the rank-two unit 3-tensor by~$\langle 2 \rangle$. As a final illustrative example consider the complete graph on 4 vertices~$K_4$ and the corresponding 4-tensor}
&\Tens_2\Bigl(\begin{minipage}{1.3cm}
\begin{center}
\begin{tikzpicture}[vertex/.style = {circle, fill, black, minimum width = 1.mm, inner sep=0pt}, novertex/.style = {minimum width = 0.mm, inner sep=0pt}]
	\node[vertex] (0,0) (X) {};
	\node[vertex, below= 3mm of X] (Y) {};
	\node[vertex, below right= 4mm of Y, xshift=1mm, yshift=0.5mm] (Z) {};
	\node[vertex, below left= 4mm of Y, xshift=-1mm, yshift=0.5mm] (V) {};
	\draw[line width=0.2mm] (X) --  (Y);
	\draw[line width=0.2mm] (X) edge[bend left=10]  (Z);
	\draw[line width=0.2mm] (X) edge[bend right=10] (V);
	\draw[line width=0.2mm] (Y) -- (Z) -- (V) -- (Y);
\end{tikzpicture}
\end{center}
\end{minipage} \Bigr)  
\,&=& \hspace{-0.5em}\sum_{i\in \{0,1\}^6} \hspace{-0.5em}(b_{i_1}\tightotimes b_{i_2}\tightotimes b_{i_3}) \otimes (b_{i_3}\tightotimes b_{i_4} \tightotimes b_{i_5}) \otimes (b_{i_2}\tightotimes b_{i_4} \tightotimes b_{i_6}) \\[-1em]
&&&\hspace{3.5em}  \otimes (b_{i_1}\tightotimes b_{i_5} \tightotimes b_{i_6})\,\in\, (\CC^2\tightotimes\CC^2\tightotimes \CC^2)^{\otimes 4}.
\end{alignat*}

Our aim is to prove nontrivial upper bounds on tensor rank, border rank and asymptotic tensor rank. Let us for now focus on tensor rank. The tensor rank of a $k$-tensor in $\CC^{n_1} \otimes \cdots \otimes \CC^{n_k}$ is the smallest number~$r$ such that the tensor can be written as a sum of $r$ simple tensors $v_1\otimes \cdots \otimes v_k$ with $v_i \in \CC^{n_i}$. The tensor rank of a tensor $t$ is denoted by $\rank(t)$. Since tensor rank is invariant under the action of the group $\GL_{n_1}\times \cdots \times \GL_{n_k}$ we will identify tensors that are in the same orbit under this group action. 

Going back to our examples, a nontrivial upper bound of seven on the tensor rank of $\langle 2,2,2\rangle$ was obtained by Strassen by constructing an efficient bilinear algorithm for multiplying two-by-two matrices \cite{strassen1969gaussian}, a breakthrough result in algebraic complexity theory. The second tensor $\langle 2 \rangle$ is the canonical example of a tensor of rank two. For the third tensor, observe that the graph contains a triangle, and hence Strassen's decomposition of $\langle 2,2,2\rangle$ can directly be upgraded to a nontrivial decomposition of this tensor of size 56. 
This direct upgrading idea does not work when a tensor corresponds to a graph without triangles, say the five-cycle. The method that we will describe below allows us to prove nontrivial rank upper bounds even for tensors corresponding to graphs that do not contain triangles.    


Tensor surgery goes as follows. The central idea is to transform a good decomposition of a well-chosen starting tensor into a good decomposition of a goal tensor.
%
Take a tensor~$t$ of which we know (an upper bound on) the tensor rank (or border rank, or asymptotic rank). Then, linearly split up a tensor leg of $t$ into multiple tensor legs and take the tensor product with another tensor $s$ (``inserting~$s$'') to obtain our goal tensor, carefully keeping track of the increase in rank that this combined operation causes. 
Combining our knowledge of the decomposition of $t$ with our knowledge of the rank increase gives a decomposition of the goal tensor.

We illustrate tensor surgery with the 5-tensor of the five-cycle $C_5$,
\begin{equation}\label{eqfive}
\Tens_2\Bigl(\begin{minipage}{1.0cm}
\begin{center}
\begin{tikzpicture}[vertex/.style = {circle, fill, black, minimum width = 1.mm, inner sep=0pt}]
    \path[coordinate] (0,0)  coordinate(A)
                ++( 2*1*180/5-18:0.5cm) coordinate(B1)
                ++( 2*2*180/5-18:0.5cm) coordinate(B)
                ++( 2*3*180/5-18:0.5cm) coordinate(C)
                ++( 2*4*180/5-18:0.5cm) coordinate(D);
	\draw [line width=0.2mm] (D) node [vertex] {}  -- (A) node [vertex] {} -- (B1) node [vertex] {} -- (B) node [vertex] {} -- (C) node [vertex] {} -- (D);
\end{tikzpicture}
\end{center}
\end{minipage}\Bigr) \,= \,\sum_{i\in\{0,1\}^5}\hspace{0ex} b_{i_1i_2}\otimes b_{i_2i_3}\otimes b_{i_3i_4} \otimes b_{i_4i_5} \otimes b_{i_5 i_1} \in (\CC^2\tightotimes \CC^2)^{\otimes 5},
\end{equation}
where $b_{ij} \coloneqq b_i \tightotimes b_j$ with $\{b_0, b_1\}$ the standard basis of $\CC^2$.
The defining decomposition~\eqref{eqfive} of $\Tens_2(C_5)$ has size 32. We can improve this rank upper bound as follows. 
Define the linear map $\phi$ by
\begin{alignat*}{2}
\phi{}:{} &\CC^{2}\otimes \CC^{2} \to (\CC^2 \otimes \CC^2)^{\otimes 3} \\
&u \otimes v\mapsto \hspace{-1ex}\sum_{j\in\{0,1\}^2}\hspace{-1ex} (u \tightotimes b_{j_1}) \otimes (b_{j_1} \tightotimes b_{j_2}) \otimes (b_{j_2}\tightotimes v).
\end{alignat*}
Let $\psi : (\CC^2\tightotimes \CC^2)^{\otimes 3} \to  (\CC^2\tightotimes \CC^2)^{\otimes 5}$ be the map that applies $\phi$ at the first tensor leg. 
 Then
\[
\Tens_2(C_5) = \psi(\Tens_2(C_3)).
\]
For $\Tens_2(C_3)$ we have a good decomposition, namely Strassen's decomposition.
Define the elements $b_{\textsf{+}} \coloneqq b_0 + b_1$ and $b_{\textsf{--}} \coloneqq b_0 - b_1$ in~$\CC^2$. For any pair of symbols $x,y \in \{0,1,\textsf{+},\textsf{--}\}$ define $b_{xy} \coloneqq b_x \otimes b_y \in \CC^2 \otimes \CC^2$. Strassen's decomposition is
\begin{alignat*}{1}
\Tens_2(C_3)\,\, =\quad &-\,\, b_{\textsf{--}0}\otimes b_{0\textsf{+}} \otimes b_{11}\quad -\,\, b_{11}\otimes b_{\textsf{--}0} \otimes b_{0\textsf{+}}\quad -\,\, b_{0\textsf{+}}\otimes b_{11}\otimes b_{\textsf{--}0}\\[0.3em]
&+\,\, b_{\textsf{--}1}\otimes b_{1\textsf{+}}\otimes b_{00}\quad +\,\, b_{00}\otimes b_{\textsf{--}1}\otimes b_{1\textsf{+}}\quad +\,\, b_{1\textsf{+}}\otimes b_{00}\otimes b_{\textsf{--}1}\\[0.3em]
&+\,\,(b_{00}+b_{11})\otimes (b_{00}+b_{11})\otimes (b_{00}+b_{11}).
\end{alignat*}
Applying the linear map $\psi$ to the decomposition yields
\newcommand{\strleft}{\,}
\newcommand{\strright}{\,}
\begin{alignat*}{1}
\Tens_2(C_5) ={}&\psi(\Tens_2(C_3))\\ 
={} &-\strright \phi(b_{\textsf{--}0})\otimes b_{0\textsf{+}} \otimes b_{11}\strleft -\strright \phi(b_{11})\otimes b_{\textsf{--}0} \otimes b_{0\textsf{+}}\strleft -\strright \phi(b_{0\textsf{+}})\otimes b_{11}\otimes b_{\textsf{--}0}\\[0.3em]
&+\strright \phi(b_{\textsf{--}1})\otimes b_{1\textsf{+}}\otimes b_{00}\strleft +\strright \phi(b_{00})\otimes b_{\textsf{--}1}\otimes b_{1\textsf{+}}\strleft +\strright \phi(b_{1\textsf{+}})\otimes b_{00}\otimes b_{\textsf{--}1}\\[0.3em]
&+\strright\phi(b_{00}+b_{11})\otimes (b_{00}+b_{11})\otimes (b_{00}+b_{11}).
\end{alignat*}
For any $x,y\in \{0,1,\textsf{+}, \textsf{--}\}$, $\phi(b_{xy}) = \sum_{j\in \{0,1\}^2} (b_x \tightotimes b_{j_1}) \otimes (b_{j_1} \tightotimes b_{j_2}) \otimes (b_{j_2} \tightotimes b_y)$, which has rank 4 as a 3-tensor. We have $\phi(b_{00} + b_{11})=\sum_{i\in \{0,1\}^3}  (b_{i_1} \tightotimes b_{i_2}) \otimes (b_{i_2} \tightotimes b_{i_3}) \otimes (b_{i_3} \tightotimes b_{i_1})$ for the remaining term, which equals $\Tens_2(C_3)$ and thus has rank 7 as a 3-tensor, invoking Strassen's decomposition for the second time. Therefore, $\rank(\Tens_2(C_5)) \leq 6\cdot 4 + 1 \cdot 7 = 31$.
This means that we have achieved our goal of constructing a nontrivial rank-31 decomposition of the goal tensor~$\Tens_2(C_5)$,  smaller than the trivial decomposition of size 32. 

Identifying $\Tens_2(C_3)$ with the graph $C_3$, we think of $\psi$ as a ``surgery map'' that splits a vertex into two vertices and inserts a new vertex together with two edges.
In pictures, the effect of applying $\psi$ is
\[
\begin{minipage}{0.8cm}
\begin{tikzpicture}[vertex/.style = {circle, fill, black, minimum width = 1.mm, inner sep=0pt}]
    \path[coordinate] (0,0)  coordinate(A)
                ++( 2*1*60+30:0.8cm) coordinate(B)
                ++( 2*2*60+30:0.8cm) coordinate(C);
	\draw [line width=0.2mm]  (A) node [vertex] {} -- (B) node [vertex] {} -- (C) node [vertex] {} -- (A);
\end{tikzpicture}
\end{minipage}
\quad\leadsto\quad
\begin{minipage}{0.8cm}
\begin{tikzpicture}[vertex/.style = {circle, fill, black, minimum width = 1.mm, inner sep=0pt}]
    \path[coordinate] (0,0)  coordinate(A)
                ++( 2*1*180/5-18:0.7cm) coordinate(B1)
                ++( 2*2*180/5-18:0.7cm) coordinate(B)
                ++( 2*3*180/5-18:0.7cm) coordinate(C)
                ++( 2*4*180/5-18:0.7cm) coordinate(D);
	\draw [line width=0.2mm] (D) node [vertex] {}  -- (A) node [vertex] {}  (B) node [vertex] {} -- (C) node [vertex] {} -- (D);

\end{tikzpicture}
\end{minipage}
\quad\leadsto\quad
\begin{minipage}{0.8cm}
\begin{tikzpicture}[vertex/.style = {circle, fill, black, minimum width = 1.mm, inner sep=0pt}]
    \path[coordinate] (0,0)  coordinate(A)
                ++( 2*1*180/5-18:0.7cm) coordinate(B1)
                ++( 2*2*180/5-18:0.7cm) coordinate(B)
                ++( 2*3*180/5-18:0.7cm) coordinate(C)
                ++( 2*4*180/5-18:0.7cm) coordinate(D);
	\draw [line width=0.2mm] (D) node [vertex] {}  -- (A) node [vertex] {} -- (B1) node [vertex] {} -- (B) node [vertex] {} -- (C) node [vertex] {} -- (D);
\end{tikzpicture}
\end{minipage}
\]
Splitting a tensor leg possibly increases tensor rank, as is the case with the term $b_{00} + b_{11}$ above. To remind us of this, we like to decorate the picture with a ``virtual edge'' connecting the cut vertices,
\[
\begin{minipage}{0.8cm}
\begin{tikzpicture}[vertex/.style = {circle, fill, black, minimum width = 1.mm, inner sep=0pt}]
    \path[coordinate] (0,0)  coordinate(A)
                ++( 2*1*60+30:0.8cm) coordinate(B)
                ++( 2*2*60+30:0.8cm) coordinate(C);
	\draw [line width=0.2mm]  (A) node [vertex] {} -- (B) node [vertex] {} -- (C) node [vertex] {} -- (A);
\end{tikzpicture}
\end{minipage}
\quad\leadsto\quad
\begin{minipage}{0.8cm}
\begin{tikzpicture}[vertex/.style = {circle, fill, black, minimum width = 1.mm, inner sep=0pt}]
    \path[coordinate] (0,0)  coordinate(A)
                ++( 2*1*180/5-18:0.7cm) coordinate(B1)
                ++( 2*2*180/5-18:0.7cm) coordinate(B)
                ++( 2*3*180/5-18:0.7cm) coordinate(C)
                ++( 2*4*180/5-18:0.7cm) coordinate(D);
	\draw [line width=0.2mm, dashed, gray] (A) -- (B);
	\draw [line width=0.2mm] (D) node [vertex] {}  -- (A) node [vertex] {}  (B) node [vertex] {} -- (C) node [vertex] {} -- (D);

\end{tikzpicture}
\end{minipage}
\quad\leadsto\quad
\begin{minipage}{0.8cm}
\begin{tikzpicture}[vertex/.style = {circle, fill, black, minimum width = 1.mm, inner sep=0pt}]
    \path[coordinate] (0,0)  coordinate(A)
                ++( 2*1*180/5-18:0.7cm) coordinate(B1)
                ++( 2*2*180/5-18:0.7cm) coordinate(B)
                ++( 2*3*180/5-18:0.7cm) coordinate(C)
                ++( 2*4*180/5-18:0.7cm) coordinate(D);
	\draw [line width=0.2mm, dashed, gray] (A) -- (B);
	\draw [line width=0.2mm] (D) node [vertex] {}  -- (A) node [vertex] {} -- (B1) node [vertex] {}  -- (B) node [vertex] {} -- (C) node [vertex] {} -- (D);
\end{tikzpicture}
\end{minipage}
\]
The crux is the triangle appearing on the right. This triangle indicates the worst-case situation where $\phi(b_{00} + b_{11}) = \Tens_2(C_3)$ has rank 7. Of course, to get a good decomposition it is important to also keep track of the best-case situation where $\phi(b_{xy})$ has rank 4.   

\subsection{Main results}
Let $G = (V,E)$ be a graph with vertex set $V$ and edge set $E$, and let $n$ be a natural number. Let $b_1,\ldots, b_n$ be the standard basis of $\CC^n$.  We define the order-$|V|$ tensor $\Tens_n(G)$ as
\[
\Tens_n(G) \coloneqq \sum_{i\in [n]^E} \bigotimes_{v \in V} \Bigl(\bigotimes_{\substack{e\in E:\\ v\in e}} b_{i_e}\Bigr),
\]
summing over all tuples $i$ indexed by $E$ with entries in $[n]\coloneqq\{1,2,\ldots,n\}$.
Let $\rank(\Tens_n(G))$ be the tensor rank of the tensor $\Tens_n(G)$ and let
\[
\omega(\Tens_2(G))\coloneqq \lim_{n\to\infty} \tfrac1n \log_2 \rank(\Tens_2(G)^{\otimes n}) = \lim_{n\to\infty} \log_{n} \rank(\Tens_{n}(G))
\]
be the \emph{exponent} of $\Tens_2(G)$, a measure of the asymptotic behaviour of the tensor rank of $\Tens_n(G)$. (The limit exists and equals the infimum by Fekete's lemma. The equality follows by relating $\log_{2^n} \rank(T_{2^n}(G))$ to $\log_n \rank(T_n(G))$.)

The triangle tensor $\Tens_n(C_3)$ is well-studied, because its tensor rank equals the number of bilinear scalar multiplications required to multiply two $n\times n$ matrices. Moreover the so-called \emph{matrix multiplication exponent}
\[
\omega \coloneqq \omega(\Tens_2(C_3)) 
\]
equals the smallest number $\beta \in \RR$ such that for any $\varepsilon>0$ two $n\times n$ matrices can be multiplied with $\Oh(n^{\beta + \varepsilon})$ scalar multiplications and additions. 
A priori, $2 \leq \omega\leq 3$. As mentioned above, Strassen showed that $\rank(\Tens_2(C_3)) \leq 7$ by constructing an efficient bilinear algorithm for multiplying two-by-two matrices, thereby showing that $\omega \leq \log_2 7$. The rank upper bound was later proven to be tight by Winograd \cite{winograd1971multiplication}. Since Strassen's breakthrough, much effort has been put into obtaining better bounds on $\omega$, the state of the art being $2\leq \omega < 2.3728639$ \cite{le2014powers}. Proposed approaches towards obtaining lower bounds on the rank of $\Tens_n(C_3)$ and $\omega$ include Strassen's asymptotic spectra \cite{MR1089800,MR929980}, the geometric complexity theory programme \cite{MR2138544,MR2932001} and Young flattenings \cite{MR709378,landsberg2011new,MR3081636}. The recent best upper bounds on $\omega$ have been obtained by extending a construction of Coppersmith and Winograd \cite{coppersmith1987matrix}. It was shown however that this type of extension cannot prove an upper bound on $\omega$ below $2.3078$ \cite{MR3388238}. Recently, good upper bounds (not the best) have been obtained by a group-theoretic approach which does not fall under this type of extensions~\cite{MR3202968}.

In this paper we go into unexplored terrain by studying the tensor rank and exponent of larger cycle tensors.
Our first result is that, for any odd $k$, the tensor of the $k$-cycle has a nontrivial tensor rank.\footnote{For even $k$, trivially $\rank(\Tens_2(C_k)) = 2^k$, while for odd $k$, trivially $2^{k-1} \leq \rank(\Tens_2(C_k)) \leq 2^k$, see Section~\ref{prelim}.}
\begin{theorem*}
Let $k$ be odd. Then $\rank(\Tens_2(C_k)) \leq 2^k - 1$.
\end{theorem*}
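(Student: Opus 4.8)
The plan is to prove this by induction on odd $k$, using the $\Tens_2(C_5)$ computation from the introduction as the template for the inductive step. The base case $k=3$ is exactly Strassen's bound $\rank(\Tens_2(C_3))\le 7=2^3-1$. For the step, suppose $k\ge 5$ is odd and that $\rank(\Tens_2(C_{k-2}))\le 2^{k-2}-1$. I would use the same kind of surgery as for $C_5$, but now inserting a path of $k-3$ new vertices rather than a single one: define
\[
\phi\colon\CC^2\otimes\CC^2\to(\CC^2\otimes\CC^2)^{\otimes(k-2)},\qquad u\otimes v\mapsto\sum_{j\in\{0,1\}^{k-3}}(u\otimes b_{j_1})\otimes(b_{j_1}\otimes b_{j_2})\otimes\cdots\otimes(b_{j_{k-4}}\otimes b_{j_{k-3}})\otimes(b_{j_{k-3}}\otimes v),
\]
and let $\psi$ be the map that applies $\phi$ at the first leg of a tensor in $(\CC^2\otimes\CC^2)^{\otimes 3}$.

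The first thing to verify is the surgery identity $\Tens_2(C_k)=\psi(\Tens_2(C_3))$. Expanding the defining sum of $\Tens_2(C_3)$ and applying $\phi$ to its first leg, the three cycle indices together with the $k-3$ path indices $j$ become precisely the $k$ edge indices of $C_k$; geometrically, $\psi$ splits one vertex of the triangle and glues in a path of new vertices, turning $C_3$ into $C_k$. This is a direct index computation modelled on the $C_5$ case.

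Next, apply the linear map $\psi$ to Strassen's seven-term decomposition of $\Tens_2(C_3)$. Six of the terms have first leg a product $b_x\otimes b_y$ of basis-type vectors, and $\phi(b_x\otimes b_y)=\sum_{j\in\{0,1\}^{k-3}}(\cdots)$ is visibly a sum of $2^{k-3}$ simple tensors, so each such term contributes at most $2^{k-3}$ to the rank — and we only need this upper bound, not the exact rank of this path-shaped tensor, so the step is essentially free. The seventh term $(b_{00}+b_{11})^{\otimes 3}$ is sent to $\phi(b_{00}+b_{11})\otimes(b_{00}+b_{11})^{\otimes 2}$, and the crucial point is that $\phi(b_{00}+b_{11})=\Tens_2(C_{k-2})$: forcing the two dangling ends of the path to carry the same index and summing over that index closes the path into a $(k-2)$-cycle. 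Since $k-2$ is again odd, this term has rank at most $\rank(\Tens_2(C_{k-2}))\le 2^{k-2}-1$ by the induction hypothesis. Summing the contributions, $\rank(\Tens_2(C_k))\le 6\cdot 2^{k-3}+(2^{k-2}-1)=3\cdot 2^{k-2}+2^{k-2}-1=2^k-1$.

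The only genuinely delicate part is pinning down the two identities $\Tens_2(C_k)=\psi(\Tens_2(C_3))$ and $\phi(b_{00}+b_{11})=\Tens_2(C_{k-2})$ for general $k$ — that is, making sure the index bookkeeping for the length-$(k-3)$ path insertion is correct and that the two extra ``special'' legs coming from Strassen's last term are harmless under the identification of tensors in the same $\GL$-orbit. Everything else is the arithmetic of the recursion $r_k\le 6\cdot 2^{k-3}+r_{k-2}$ with $r_3=7$, which telescopes to $2^k-1$; in particular, no lower bounds or genericity arguments are needed.
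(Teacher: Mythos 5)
Your proof is correct. It uses the same surgery machinery as the paper --- the maps $\phi,\psi$, Strassen's explicit decomposition, and the dichotomy between summands whose first leg has local rank $1$ versus local rank $2$ --- but it organizes the induction differently. The paper fixes the insertion to a single new vertex and inducts on the cycle being operated on: its inductive hypothesis is the strengthened statement (the ``Moreover'' clause of the theorem) that $\Tens(C_\ell)$ admits a size-$(2^\ell-1)$ decomposition with exactly one summand of local rank $2$ at the first leg, and the step $(2^\ell-2)\cdot 4 + 1\cdot 7 = 2^{\ell+2}-1$ uses that count. You instead always operate on Strassen's decomposition of $C_3$ and insert a path of $k-3$ new vertices; the local-rank bookkeeping is read off once and for all from Strassen's seven terms, and the inductive hypothesis is only the bare rank bound for $C_{k-2}$, invoked for the single closed-up term $\phi(b_{00}+b_{11})=\Tens_2(C_{k-2})$. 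The arithmetic $6\cdot 2^{k-3}+(2^{k-2}-1)=2^k-1$ gives the same total. What your route buys is that no strengthened inductive hypothesis is needed; what it gives up is the structural conclusion about the resulting decomposition of $\Tens(C_k)$ (its local-rank profile), which the paper's induction produces for free and reuses later for the local-rank analysis of other decompositions. The two identities you flag as the delicate points, $\Tens_2(C_k)=\psi(\Tens_2(C_3))$ and $\phi(b_{00}+b_{11})=\Tens_2(C_{k-2})$, are indeed correct: in each case the path indices $j_1,\dots,j_{k-3}$ together with the surviving triangle indices (respectively the single diagonal index $i$) are exactly the edge indices of a $k$-cycle (respectively a $(k-2)$-cycle), and the reordering of indices within a leg is absorbed by the local $\GL$-identification.
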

This was previously only known for odd $k\leq 5$ \cite{buhrman2016nondeterministic}.
Let $\omega_k \coloneqq \omega(\Tens_2(C_k))$. We prove a relationship between the exponents of odd cycles.
\begin{theorem*}
Let $k, \ell$ be odd. Then $\omega_{k+\ell - 1} \leq \omega_{k} + \omega_{\ell}$.
\end{theorem*}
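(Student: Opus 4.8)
The plan is to deduce the statement from a single tensor restriction, valid for every $n$,
\[
\Tens_n(C_{k+\ell-1}) \text{ is a restriction of } \Tens_n(C_k)\otimes\Tens_n(C_\ell),
\]
and then pass to exponents. Recall that a restriction here means applying independent linear maps to the legs (possibly after grouping several legs into one), and that tensor rank does not increase under restriction and is submultiplicative under tensor product.

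First I would fix the combinatorial picture. View $C_k$ as the cycle on vertices $1,\dots,k$ whose \emph{closing} edge joins $k$ back to $1$ (index $i_k$), and view $C_\ell$ analogously on primed vertices $1',\dots,\ell'$ with closing edge joining $\ell'$ back to $1'$ (index $i'_\ell$). In the $(k+\ell)$-tensor $\Tens_n(C_k)\otimes\Tens_n(C_\ell)$ I would group the leg of vertex $k$ with the leg of vertex $1'$ into one leg; this leg now carries four $\CC^n$-factors, corresponding to the two edges at $k$ (one being the closing edge of $C_k$) and the two edges at $1'$ (one being the closing edge of $C_\ell$). On this grouped leg I would apply the linear map $b_a\tightotimes b_b\tightotimes b_c\tightotimes b_d \mapsto \delta_{b,c}\, b_a\tightotimes b_d$, i.e.\ pair the $i_k$-slot with the $i'_\ell$-slot by the standard bilinear form and retain the other two slots, leaving all remaining legs untouched. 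A direct index computation then shows the resulting $(k+\ell-1)$-tensor equals $\Tens_n(C_{k+\ell-1})$: identifying $i_k$ with $i'_\ell$ splices the path $1,\dots,k-1$ of $C_k$ onto the path $2',\dots,\ell'$ of $C_\ell$ through the merged vertex, and the two closing edges fuse into a single closing edge joining $\ell'$ to $1$, so one obtains a single cycle on $k+\ell-1$ vertices with every edge of size $n$. (This is the asymptotic form of the vertex-opening surgery from the introduction: opening one leg of $\Tens_n(C_k)$ into a path of $\ell$ vertices, whose worst-case cost is the full gadget $\Tens_n(C_\ell)=\phi(I_n)$; here we simply pay for the whole second tensor factor in one go.)

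From the restriction and the two rank inequalities above, $\rank(\Tens_n(C_{k+\ell-1})) \le \rank(\Tens_n(C_k))\cdot\rank(\Tens_n(C_\ell))$ for all $n$. Taking $\log_n$ and letting $n\to\infty$, and using that the limits defining $\omega_{k+\ell-1}$, $\omega_k$ and $\omega_\ell$ all exist (the Fekete-type remark following the definition of the exponent), gives $\omega_{k+\ell-1}\le\omega_k+\omega_\ell$. The hypothesis that $k$ and $\ell$ are odd enters only to guarantee that $k+\ell-1$ is again odd, so that all three exponents are the nontrivial ones.

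I expect the only genuine obstacle to be pinning down the gluing correctly: recognizing that it is $C_{k+\ell-1}$ that is produced (rather than $C_{k+\ell}$ from a wedge, or $C_{k+\ell-2}$ from contracting a leg across both factors), and verifying that fusing the two closing edges is implemented by an honest linear map on one grouped leg rather than a cross-leg contraction. Once that restriction is written down, the rest is bookkeeping — submultiplicativity of rank under $\otimes$ and existence of the exponent limits — and is routine.
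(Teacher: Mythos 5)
Your proof is correct, and it reaches the bound by a route that is genuinely different in structure from the paper's, though the two are closely related. The paper's proof is a ``surgery on a decomposition'' argument: it fixes a near-optimal decomposition of $\Tens_n(C_k)$ into roughly $n^{\omega_k+\varepsilon}$ simple terms, applies to the first leg of each term the map $\phi$ that opens that leg into a path of $\ell$ vertices, and bounds each image $\phi(t^1_i)$ by $\rank(\Tens_n(C_\ell))$ via the local rank bound $\rank_{\CC^n\otimes\CC^n}(t^1_i)\le n$, which gives $\phi(t^1_i)\le\Tens_n(C_\ell)$. You instead package the whole argument into the single restriction $\Tens_n(C_k)\otimes\Tens_n(C_\ell)\ge\Tens_n(C_{k+\ell-1})$, obtained by merging one vertex from each cycle and contracting the two closing-edge slots at the merged leg, and then invoke submultiplicativity of rank; your index computation checks out (the $k+\ell-1$ free indices $i_1,\dots,i_{k-1},i'_1,\dots,i'_{\ell-1}$ and the contracted index $j=i_k=i'_\ell$ trace out exactly one cycle of length $k+\ell-1$), and the passage to the limit is legitimate since the limits defining the exponents exist. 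Your contraction map is essentially $\phi$ read backwards, so the two proofs are adjoint views of the same combinatorial move and give the same asymptotic bound. What the paper's term-by-term formulation buys is that it retains the local ranks of the individual summands, which is exactly what is exploited in Theorem\nobreakspace\ref{explicit}, Proposition\nobreakspace\ref{further} and Remark\nobreakspace\ref{localrank} to beat the worst-case cost $\rank(\Tens_n(C_\ell))$ per summand; your one-shot restriction necessarily pays that worst case for every summand. What your version buys is a cleaner standalone statement (a restriction from a product of two cycle tensors to a longer cycle tensor) and the correct observation that oddness of $k$ and $\ell$ plays no role in the inequality itself, only in making it nontrivial.
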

We moreover prove an upper bound on the exponent of odd cycles in terms of the dual exponent of matrix multiplication $\alpha$, which we will define in Section~\ref{prelim}.
\begin{theorem*} Let $k$ be odd. Then
\[
\omega_k \leq k - \alpha\Bigl(1 + \frac{1 - \alpha}{k - 1 + \alpha}\Bigr) \leq k - \alpha.
\]
\end{theorem*}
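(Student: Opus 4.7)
I would approach this by the $C_3 \to C_k$ surgery: split one vertex of the triangle $C_3$ into a path of $k-2$ new vertices to obtain $C_k$. Concretely, define the linear map $\phi\colon\CC^n\otimes\CC^n\to(\CC^n\otimes\CC^n)^{\otimes(k-2)}$ by
\[
\phi(u\otimes v)=\sum_{p_1,\ldots,p_{k-3}\in[n]}(u\otimes b_{p_1})\otimes(b_{p_1}\otimes b_{p_2})\otimes\cdots\otimes(b_{p_{k-3}}\otimes v),
\]
extend linearly, and let $\psi$ apply $\phi$ to a single leg of a tensor in $(\CC^n\otimes\CC^n)^{\otimes 3}$. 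A direct index computation confirms $\psi(\Tens_n(C_3))=\Tens_n(C_k)$, so that any decomposition of $\Tens_n(C_3)$ yields one of $\Tens_n(C_k)$ through $\psi$.

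The key observations about $\phi$ are: (i) for a rank-one matrix $u\otimes v$ one has $\rank(\phi(u\otimes v))\leq n^{k-3}$ from the defining sum; and (ii) for the identity matrix $I=\sum_i b_i\otimes b_i$ one has $\phi(I)=\Tens_n(C_{k-2})$, which has rank $n^{\omega_{k-2}+o(1)}$. The identity input therefore \emph{rebuilds} a smaller odd cycle under the surgery and so, by induction on $k$, contributes only $n^{\omega_{k-2}+o(1)}$ to the bound.

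I would then construct a decomposition of $\Tens_n(C_3)$ of the form
\[
\Tens_n(C_3)=\sum_{r=1}^{N}\alpha_r\otimes\beta_r\otimes\gamma_r+\alpha_\star\otimes\beta_\star\otimes I,
\]
with each $\gamma_r$ a rank-one matrix and $N=n^{3-\alpha+o(1)}$, generalising the seven-term decomposition of $\Tens_2(C_3)$ used in the excerpt. The simple part arises from partitioning $\Tens_n(C_3)=\langle n,n,n\rangle$ into $n^{1-\alpha}$ blocks of the rectangular tensor $\langle n,n,n^\alpha\rangle$, each of rank $n^{2+o(1)}$ by the definition of $\alpha$, with the single identity term absorbing a diagonal contribution. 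Applying $\psi$ term by term yields
\[
\rank(\Tens_n(C_k))\leq N\cdot n^{k-3}+\rank(\Tens_n(C_{k-2}))\leq n^{k-\alpha+o(1)}+n^{\omega_{k-2}+o(1)},
\]
and induction on odd $k$ with base $\omega_3=\omega\leq 3-\alpha$ gives the weaker bound $\omega_k\leq k-\alpha$.

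For the refined bound $\omega_k\leq k-\alpha(1+(1-\alpha)/(k-1+\alpha))=k(k-1)/(k-1+\alpha)$, I expect to run the same surgery but tune the shape of the rectangular matrix multiplication with an additional free parameter, balancing the count of simple terms against the rank of $\phi$ applied to rank-one factors living in the asymmetric ambient space. The main technical obstacle is arranging the decomposition of $\Tens_n(C_3)$ with the required rank-one structure on the third leg (aside from the single identity term) at the asserted size $n^{3-\alpha+o(1)}$---so that no extra factor is incurred from the matrix rank of third-leg factors in the rectangular decomposition---and then carrying out the one-parameter optimisation whose minimum is exactly the stated rational function of $k$ and $\alpha$.
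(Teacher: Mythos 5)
Your plan hinges on a decomposition of $\Tens_n(C_3)$ into $N=n^{3-\alpha+o(1)}$ simple terms whose third legs are rank-one matrices, plus a single term carrying the identity on the third leg. Such a decomposition cannot exist, and the obstruction is exactly the local-rank accounting of Remark~3.5 of the paper: splitting the third leg of $\Tens_n(C_3)$ (i.e.\ regarding it as a $4$-tensor) yields the path tensor $\Tens_n(L_3)$, whose rank is $n^3$; since splitting a simple summand produces a tensor of rank equal to the local rank of its split leg, every decomposition $\sum_i t_i^1\otimes t_i^2\otimes t_i^3$ must satisfy $\sum_i\rank_{\CC^n\otimes\CC^n}(t_i^3)\geq n^3$. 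Your decomposition would give $N\cdot 1+n\geq n^3$, i.e.\ $N\geq n^3-n$, contradicting $N=n^{3-\alpha+o(1)}$ since $\alpha>0.3$. (The block construction you sketch does not deliver rank-one third legs either: cutting $\langle n,n,n\rangle$ into $n^{1-\alpha}$ copies of $\langle n,n,\floor{n^\alpha}\rangle$ leaves third legs of local rank up to $n^{\alpha}$.) So the ``main technical obstacle'' you flag is not a gap to be filled but an impossibility.

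The paper circumvents this by never requiring structural information about the decomposition. It introduces the unbalanced cycle tensor $\Tens_{n,\gamma}(C_\ell)$ with one edge of dimension $\floor{n^{\gamma}}$, $\gamma<\alpha$, and performs each split at a vertex incident to that thin edge: the split leg lives in $\CC^{\floor{n^{\gamma}}}\otimes\CC^n$, so \emph{every} element automatically has local rank at most $n^{\gamma}$, hence $\phi(t^1)\leq\Tens_{n,\gamma}(C_3)$, which has rank $\Oh(n^{2+\varepsilon})$ by the definition of $\alpha$. Iterating one split at a time gives $\rank(\Tens_{n,\gamma}(C_k))=\Oh(n^{k-1+\varepsilon})$. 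The refined bound then comes not from tuning the rectangle shape, as you guess, but from cyclic symmetrization: tensoring the $k$ rotations of $\Tens_{n,\gamma}(C_k)$ produces the balanced cycle with edge dimension $n^{k-1}\floor{n^{\gamma}}$ at cost $n^{k(k-1)+O(\varepsilon)}$, whence $\omega_k\leq k(k-1)/(k-1+\alpha)=k-\alpha\bigl(1+\frac{1-\alpha}{k-1+\alpha}\bigr)$. A term with large local rank on the split leg must be paid for; only the unbalanced construction, where the ambient dimension caps that local rank at $n^{\gamma}$, achieves the stated exponent.
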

In particular, since $0.3029805 < \alpha \leq 1$, the exponent $\omega_k$ is bounded away from~$k$ by a constant.

Our results on the exponent of odd cycles are optimal in the sense that if $\omega = 2$, then $\omega_k = k-1$ for all odd $k$. For tensor rank, many open problems remain. As a concrete example, we do not know the value of $\rank(\Tens_2(C_5))$. We know that it is at least 25 and at most 31 (see Remark\nobreakspace \ref {openquestion}).

Besides looking at graphs, we will in this paper explore tensor surgery on hypergraphs, where one splits up a tensor leg into multiple tensor legs and instead of a graph inserts a hypergraph. We derive a number of results on the asymptotic rank similar to the ones on cycles graphs.

As the main results indicate, tensor surgery works well for sparse graphs. In a subsequent paper, with an entirely different method, we have obtained nontrivial upper bounds on the exponent of dense graphs \cite{christandl2016asymptotic} (which in turn can be used again as starting tensors for the tensor surgery put forward in this work). The common theme of that paper and the current paper is the following open problem that generalizes the problem of computing the matrix multiplication exponent $\omega$.
\begin{problem}
Let $G$ be a graph. What is the value of $\omega(\Tens_2(G))$?
\end{problem}

%
%

\subsection{Connections to other work}
Tensor rank has been studied in various fields other than algebraic complexity theory: in algebraic statistics \cite{pachter2005algebraic}, in signal processing \cite{comon1996decomposition}, in algebraic geometry in the context of $r$th secant varieties of Segre varieties \cite{landsberg2012tensors}, 
in quantum information theory as a monotone for stochastic local operations and classical communication (SLOCC) \cite{chen2010tensor,vrana2015asymptotic}, and in communication complexity \cite{draisma2009partition,buhrman2016nondeterministic} to characterize the complexity of communication problems, to name a few. 

The tensor rank of graph tensors in particular has the following applications.
In quantum information language, for any graph $G$ the tensor $\Tens_n(G)$ is the (unnormalized) quantum state obtained by identifying the vertices of~$G$ with quantum systems and letting each edge of the graph correspond to a dimension-$n$ Einstein--Podolsky--Rosen (EPR) pair shared among the vertices contained in the edge. For example, if $G$ contains just a single edge, then $\Tens_n(G)$ is the EPR pair $\sum_{i=1}^n b_i \otimes b_i$. The notion of a graph tensor naturally extends to hypergraphs. For any hypergraph $H$ consisting of a single edge $\{1,2,\ldots, k\}$, the tensor $\Tens_n(H)$ is the (unnormalized) Greenberger--Horne--Zeilinger (GHZ) state $\sum_{i=1}^n \ket{i}^{\otimes k}$ of rank $n$ and order $k$. 
Let $G$ be a graph on $k$ vertices. The tensor rank $\rank(\Tens_n(G))$ is the smallest number $r$ such that $\Tens_r(H)$ can be transformed into $\Tens_n(G)$ under stochastic local operations and classical communication (SLOCC). The exponent $\exponent(\Tens_n(G))$ is the smallest real number $\beta$ such that $\lceil\beta + o(n)\rceil$ copies of $\Tens_2(H)$ can be transformed into $n$ copies of~$\Tens_2(G)$ by SLOCC, when $n$ goes to infinity.

In communication complexity, a notion related to tensor rank called support rank characterizes the so-called nondeterministic quantum communication complexity with quantum broadcast communication of any boolean function \cite{buhrman2016nondeterministic}. Graph tensors correspond to the graphwise equality problem, so our upper bounds can be interpreted as upper bounds on the complexity of certain graphwise equality problems. Surprisingly, there is an implication in the other direction, namely upper bounds on the support rank of $\Tens_n(C_3)$ imply slightly worse upper bounds on the tensor rank of $\Tens_n(C_3)$. More precisely, one can define a support rank exponent $\omega_s$ analogous to the exponent $\omega$ and then the inequality $\omega \leq \tfrac32 \omega_s - 1$ holds \cite{MR3202968}. This tightly connects the study of asymptotic rank to communication complexity and in part motivated the present work. 
%
In \cite{buhrman2016nondeterministic}, an explicit size-31 decomposition of $\Tens_2(C_5)$ was found with computer assistance and nontrivial asymptotic upper bounds were given for all odd~$k$.
More precisely, Strassen's laser method combined with the distillation result of \cite{vrana} (see Equation \eqref{distillation2} in this paper) was used to get the upper bound $\omega(\Tens_2(C_k)) \leq \min_{q\geq2} \log_q ((q+1)^k/4)$. This bound converges to~$k$ when $k$ goes to infinity. The present paper thus answers in the positive the open question of whether $\omega_k$ is uniformly bounded away from $k$.




\subsection{Outline of the paper}

We will begin by discussing some preliminaries in Section~\ref{prelim}.
In Section\nobreakspace \ref {cycles} we prove rank upper bounds and exponent upper bounds for odd cycles. 
In Section\nobreakspace \ref {hyper} we explore the more general hypergraph variant of tensor surgery. 

\section{Preliminaries}\label{prelim}

After a formal definition of graph tensors, this section discusses basic notions and results around tensor rank, border rank and asymptotic rank. The sections concludes with a discussion of the lower bounds methods of flattening and Young flattening.

\subsection{Graph tensors}
All our vector spaces will be complex finite-dimensional vector spaces. However, the ideas in this paper will work over any field.
Let $G = (V,E)$ be a graph and let $n$ be a natural number. Let $b_1,\ldots, b_n$ be the standard basis of~$\CC^n$.  We define the $|V|$-tensor $\Tens_n(G)$ as
\[
\Tens_n(G) \coloneqq \sum_{i\in [n]^E} \bigotimes_{v \in V} \Bigl(\bigotimes_{\substack{e\in E:\\ v\in e}} b_{i_e}\Bigr),
\]
where the sum is over all tuples $i$ indexed by $E$ with entries in the set $[n]\coloneqq\{1,2,\ldots,n\}$. Equivalently, we can define $\Tens_n(G)$ as follows:
\[
\Tens_n(G) = \bigotimes_{e \in E} \sum_{i\in [n]}  (b_i \otimes b_i)_e \otimes (1\otimes \cdots \otimes 1)_{V\setminus e}.
\]
Here the subscripts $e$ and $V\setminus\{e\}$ in a summand indicate that the tensor legs of the summand are permuted by $(1, e_1)(2, e_2)$, and the large tensor product is a tensor product of $|V|$-tensors.
We write~$\Tens$ for $\Tens_2$.
We can safely ignore the fact that this tensor depends on the choice of order of the edges and vertices, since the tensor rank does not depend on this order, and we identify tensors that are equivalent up to local $\GL$-action. This definition directly extends to hypergraphs. These tensors were studied in \cite{vrana2015asymptotic,vrana} with the notation $\GHZ_n^G = \Tens_n(G)$. Note that $\Tens_2(G)^{\otimes k} = \Tens_2(G^{\cup k}) = \Tens_{2^k}(G)$ where $G^{\cup k}$ denotes the multigraph obtained from $G$ by taking the union of $k$ copies of~$G$ on the same vertex set.

\subsection{Tensor rank and exponent}
The \defin{tensor rank} of a $k$-tensor in $\CC^{n_1} \otimes \cdots \otimes \CC^{n_k}$ is the smallest number~$r$ such that the tensor can be written as a sum of $r$ simple tensors $v_1\otimes \cdots \otimes v_k$ with $v_i \in \CC^{n_i}$. The tensor rank of a tensor $t$ is denoted by $\rank(t)$.
%
%
When~$k$ equals~2, tensor rank is the same as matrix rank and is thus efficiently computable. When $k$ is at least 3, however, deciding tensor rank is NP-hard \cite{haastad1990tensor}, see also \cite{shitov2016hard} and \cite{schaefer2016complexity} for recent developments. The border rank of $t$ is the smallest number $r$ such that $t$ can be approximated by tensors of rank at most $r$ in the Euclidean topology. We denote border rank by $\borderrank(t)$. We refer to \cite{burgisser1997algebraic,landsberg2012tensors} for an introduction to tensor rank and border rank. We mention in particular that there is an algebraic version of border rank which is also defined over finite fields.

For two $k$-tensors $s \in U_1\otimes \cdots \otimes U_k$ and $t\in V_1 \otimes \cdots \otimes V_k$ we say \defin{$s$ restricts to $t$}, and write $s \geq t$, if there exist linear maps $A_i: U_i\to V_i$ such that $(A_1\otimes \cdots \otimes A_k) s = t$. Define the asymptotic conversion rate from $s$ to $t$ as
\[
\omega(s, t) \coloneqq \lim_{n\to\infty} \frac{1}{n} \min\{m \in \NN \mid s^{\otimes m} \geq t^{\otimes n} \}.
\]
The minimum of the empty set is defined to be $\infty$. The limit exists and equals the supremum over $n$, see Lemma 1.1 in \cite{MR929980}.
%
Let $[k]$ denote the hypergraph with vertex set $[k]$ and a single edge containing all vertices. We define the \defin{rank-$n$ unit $k$-tensor} $\Tens_n(k)$ as 
\[
\Tens_n(k)\coloneqq\Tens_n([k]) = \sum_{i\in[n]} b_i^{\otimes k}.
\]
(So, $\Tens_2(3) = \langle 2 \rangle$.)
The asymptotic log-rank or \defin{exponent} of a tensor $t$ is defined as the limit 
\begin{equation}\label{omega}
\omega(t) \coloneqq \omega(\Tens(k), t) = \lim_{n\to\infty} \frac{1}{n} \min \{m\in \NN \mid 2^m \geq \rank(t^{\otimes n})\}.
\end{equation}
%
%
The parameter $\omega(t)$ thus measures how many copies of $\Tens(k)$ are asymptotically needed to create a copy of $t$ by restriction. On the other hand, the parameter $\omega(s, \Tens(k))^{-1}$ measures how many copies of $\Tens(k)$ can asymptotically be created from one copy of $s$ by restriction. We call this the subexponent of $s$.

For any $k\in \NN$, let $C_k$ be the cycle graph with vertex set $[k]$ and edge set $\{\{1,2\},$ $\{2,3\},$ $\ldots,$ $\{k, 1\}\}$.
 A well-known result is that, asymptotically, two copies of~$\Tens(3)$ can be obtained from the triangle tensor  \cite{strassen1987relative}:  
\begin{align}
\omega(\Tens(C_3), \Tens(3))^{-1} &= 2.\label{distillation}\\
\intertext{It was recently shown that this distillation rate holds for all cycles \cite{vrana}, that~is,}
\omega(\Tens(C_k), \Tens(k))^{-1} &= 2 \quad\textnormal{for any $k$.}\label{distillation2}
\end{align}
See \cite{vrana2015asymptotic} and \cite{vrana} for general properties of $\omega(s, t)$.
It is an open problem in algebraic complexity theory to compute the exponent of matrix multiplication $\omega = \omega(\langle 2,2,2 \rangle) = \omega(\Tens(C_3))$. The currently best bounds on this number are $2\leq \omega < 2.3728639$~\cite{le2014powers}. Rather than improving the bounds on $\omega$, we will in this paper focus on bounding $\omega(\Tens(C_k))$ for $k>3$. 

We will use the following characterizations of the exponent and subexponent, which are straightforward generalizations of results by Strassen \cite{MR929980}.

\begin{lemma}\label{power} Let $t$ be a tensor. Then, $\omega(t) = \lim_{N\to\infty} \frac1N \log_2 \rank(t^{\otimes N})$.
\end{lemma}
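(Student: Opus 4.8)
The plan is to show the two-sided inequality $\omega(t) = \lim_{N\to\infty} \frac{1}{N}\log_2 \rank(t^{\otimes N})$ by unpacking the definition~\eqref{omega}. Write $f(N) \coloneqq \log_2 \rank(t^{\otimes N})$. The sequence $\rank(t^{\otimes N})$ is submultiplicative, $\rank(t^{\otimes(M+N)}) \leq \rank(t^{\otimes M})\rank(t^{\otimes N})$, since a decomposition of $t^{\otimes M}$ tensored with one of $t^{\otimes N}$ gives a decomposition of $t^{\otimes (M+N)}$. Hence $f$ is subadditive, and by Fekete's lemma the limit $L \coloneqq \lim_{N} f(N)/N$ exists and equals $\inf_N f(N)/N$. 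So the right-hand side is well-defined; it remains to identify $L$ with $\omega(t)$.

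First I would unwind the definition of $\omega(t)$ in~\eqref{omega}: it is $\lim_{n\to\infty} \frac{1}{n} g(n)$ where $g(n) \coloneqq \min\{m \in \NN \mid 2^m \geq \rank(t^{\otimes n})\} = \lceil f(n)\rceil$. The key observation is that $\Tens(k)$ is the unit tensor, so $\Tens(k)^{\otimes m} = \Tens(2^m)(k)$ has rank $2^m$, and $\Tens(2^m)(k) \geq t^{\otimes n}$ holds if and only if $2^m \geq \rank(t^{\otimes n})$ — this is exactly the statement that a $k$-tensor restricts from the rank-$r$ unit $k$-tensor precisely when its rank is at most $r$, which is immediate from the definitions of restriction and tensor rank. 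Therefore $g(n) = \lceil f(n) \rceil$, and since $f(n) \le g(n) \le f(n) + 1$ we get $\frac{1}{n} g(n) = \frac{1}{n} f(n) + O(1/n)$, so $\omega(t) = \lim_n \frac{1}{n} g(n) = \lim_n \frac{1}{n} f(n) = L$. (Here I would also remark that this limit equals the infimum, matching the parenthetical claim in the excerpt.)

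I do not expect a genuine obstacle here: the statement is a routine "generalization of results by Strassen" as the authors note, and every ingredient — submultiplicativity of rank under tensor product, Fekete's lemma, the characterization of restriction from the unit tensor in terms of rank, and the harmless rounding in the ceiling — is standard. If anything needs care, it is the bookkeeping around the two different but equivalent ways the exponent can be phrased (via $2^m \ge \rank(t^{\otimes n})$ versus directly via $\log_2 \rank$), so I would state the chain $f(n) \le g(n) \le f(n)+1$ explicitly and divide by $n$ to make the $O(1/n)$ error term vanish in the limit. The whole argument is a few lines.
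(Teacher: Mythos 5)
Your proof is correct and is essentially the argument the paper has in mind: the authors state Lemma~\ref{power} without proof as a "straightforward generalization of results by Strassen," and your chain --- $\Tens(k)^{\otimes m}=\Tens_{2^m}(k)\geq t^{\otimes n}$ iff $2^m\geq\rank(t^{\otimes n})$, hence $g(n)=\lceil f(n)\rceil$, plus submultiplicativity of rank and Fekete's lemma for the existence of $\lim_N f(N)/N$ --- is exactly the standard route. The only cosmetic slip is the notation $\Tens(2^m)(k)$, which should read $\Tens_{2^m}(k)$.
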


\begin{lemma}\label{bigoh} Let $G$ be a graph on $k$ vertices. Then,
\begin{align*}
\omega(\Tens(G)) &= \inf \{\beta \in \RR \mid \rank(\Tens_n(G)) = \Oh(n^\beta)\}\\
&= \inf \{\beta \in \RR \mid \Tens_n(G) \leq \Tens_{\Oh(n^\beta)}(k)\}.
\end{align*}
For any $n\in \NN$, $\omega(\Tens(G)) \leq \log_n \rank(T_n(G))$.
\end{lemma}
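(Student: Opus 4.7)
The strategy is to reduce everything to three basic facts. First, the identity $\omega(\Tens(G)) = \lim_{n\to\infty} \log_n \rank(\Tens_n(G))$ recorded in the introduction. Second, the submultiplicativity $\rank(\Tens_{nm}(G)) \leq \rank(\Tens_n(G))\cdot\rank(\Tens_m(G))$, which follows from the isomorphism $\Tens_n(G)\otimes \Tens_m(G) \cong \Tens_{nm}(G)$ obtained by identifying $\CC^n \otimes \CC^m \cong \CC^{nm}$ on each edge and regrouping at every vertex. Third, the universal property of unit tensors: for any $k$-tensor $t$, $\rank(t) \leq r$ if and only if $t \leq \Tens_r(k)$, since a rank-$r$ decomposition $t = \sum_{i=1}^r v_{i,1}\otimes \cdots \otimes v_{i,k}$ is exactly a factorisation $t = (A_1\otimes\cdots\otimes A_k)\Tens_r(k)$ with $A_j(b_i)=v_{i,j}$.

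First I would derive the two equalities. By the universal property, the conditions $\rank(\Tens_n(G)) = \Oh(n^\beta)$ and $\Tens_n(G) \leq \Tens_{\Oh(n^\beta)}(k)$ pick out the same $\beta$'s, so their two infima agree. To identify this common infimum with $\omega(\Tens(G))$ I would argue both directions from fact (i): if $\rank(\Tens_n(G)) \leq C n^\beta$ for all $n$ then $\log_n \rank(\Tens_n(G)) \leq \beta + \log_n C$, and sending $n\to\infty$ gives $\omega(\Tens(G))\leq \beta$; conversely, for any $\beta > \omega(\Tens(G))$ there is some $N$ such that $\rank(\Tens_n(G)) \leq n^\beta$ for all $n\geq N$, and the finitely many smaller terms contribute only a constant factor, so $\rank(\Tens_n(G)) = \Oh(n^\beta)$.

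For the final inequality, I would iterate fact (ii) to get $\rank(\Tens_{n^k}(G)) \leq \rank(\Tens_n(G))^k$ for every $k \in \NN$. Taking logarithms base $n^k$ yields $\log_{n^k}\rank(\Tens_{n^k}(G)) \leq \log_n \rank(\Tens_n(G))$, and letting $k\to\infty$ identifies the left-hand side with $\omega(\Tens(G))$ via fact (i). No step here is particularly delicate; the lemma is really a dictionary between three equivalent ways of describing the exponent, with submultiplicativity of graph tensors doing the only real work. The mildest subtlety is that fact (i) must be used along the subsequence $n^k \to \infty$, which is exactly what allows the last inequality to hold for every $n$ rather than only for powers of two.
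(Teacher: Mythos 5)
The paper does not actually prove this lemma: it is introduced as a ``straightforward generalization of results by Strassen'' and stated without proof, so there is no in-paper argument to compare yours against line by line. Your proof is correct and is essentially the standard argument one would supply. The three ingredients you isolate --- the limit formula $\omega(\Tens(G))=\lim_{n\to\infty}\log_n\rank(\Tens_n(G))$ recorded in the introduction, the isomorphism $\Tens_n(G)\otimes\Tens_m(G)\cong\Tens_{nm}(G)$, and the equivalence $\rank(t)\le r \iff t\le\Tens_r(k)$ --- do all the work, and each is justified correctly; in particular the two infima coincide exactly because a rank bound $\rank(\Tens_n(G))\le Cn^\beta$ is the same datum as a restriction $\Tens_n(G)\le\Tens_{\lceil Cn^\beta\rceil}(k)$. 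Two small points. First, you reuse the symbol $k$ both for the number of vertices of $G$ and for the iteration exponent in $\rank(\Tens_{n^k}(G))\le\rank(\Tens_n(G))^k$; rename the latter (say to $m$) to avoid a clash with the statement of the lemma. Second, your logical order is slightly inverted relative to a from-scratch development: the submultiplicativity-plus-subsequence argument you use for the final inequality is precisely what underlies the introduction's claim that $\lim_{n\to\infty}\log_n\rank(\Tens_n(G))$ exists and equals $\omega(\Tens(G))$ (via Fekete's lemma and interpolation between powers of $2$ and powers of $n$). Since the paper asserts that identity up front, invoking it as a black box is legitimate here, but a fully self-contained proof would first establish $\rank(\Tens_{nm}(G))\le\rank(\Tens_n(G))\,\rank(\Tens_m(G))$ together with monotonicity $\rank(\Tens_n(G))\le\rank(\Tens_m(G))$ for $n\le m$, and then derive the limit statement and the inequality $\omega(\Tens(G))\le\log_n\rank(\Tens_n(G))$ simultaneously from them.
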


\begin{lemma}\label{subomega}
Let $G$ be a graph on $k$ vertices. Then,
\[
\omega(\Tens(G), \Tens(k))^{-1} = \sup\{\beta \in \RR\mid \Tens_n(G) \geq \Tens_{\Omega(n^\beta)}(k)\}.
\]
\end{lemma}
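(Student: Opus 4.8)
The plan is to funnel both sides of the identity through the single function
\[
f(n) \;\coloneqq\; \max\{\, m \in \NN : \Tens_n(G) \geq \Tens_m(k) \,\},
\]
the largest unit tensor obtainable from $\Tens_n(G)$ by restriction. The first step is to see that $f$ is well-defined and finite: the defining set is nonempty because $\Tens_n(G)$ restricts to a nonzero rank-one tensor (collapse every edge-index to $1$), and it is bounded above because restriction cannot increase rank while $\rank(\Tens_m(k)) = m$; thus $1 \le f(n) \le \rank(\Tens_n(G)) < \infty$. Next I would record two structural properties of $f$. It is non-decreasing, since $\Tens_{n'}(G)$ restricts to $\Tens_n(G)$ for $n' \ge n$ by coordinate projections. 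And it is supermultiplicative, $f(nn') \ge f(n)f(n')$, because $\Tens_n(G)\otimes\Tens_{n'}(G) = \Tens_{nn'}(G)$ and $\Tens_a(k)\otimes\Tens_b(k) = \Tens_{ab}(k)$.

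Setting $a_m \coloneqq \log_2 f(2^m)$, supermultiplicativity makes $(a_m)$ superadditive, while $a_m \le \log_2 \rank(\Tens(G)^{\otimes m})$ is, by Lemma~\ref{power}, bounded by roughly $m\,\omega(\Tens(G))$. Fekete's lemma then gives a finite limit
\[
c \;\coloneqq\; \lim_{m\to\infty}\tfrac1m a_m \;=\; \sup_{m}\tfrac1m a_m ,
\]
and the remainder of the proof is to identify both sides of the lemma with $c$. For the left-hand side, unrolling the definition of $\omega(\cdot,\cdot)$ and using $\Tens(G)^{\otimes m} = \Tens_{2^m}(G)$, $\Tens(k)^{\otimes n} = \Tens_{2^n}(k)$, the condition $\Tens(G)^{\otimes m} \ge \Tens(k)^{\otimes n}$ becomes $f(2^m) \ge 2^n$, i.e. $a_m \ge n$; hence $\omega(\Tens(G),\Tens(k)) = \lim_n \tfrac1n\min\{m : a_m \ge n\}$. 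Comparing this inverse function of $(a_m)$ with $c$ — use $a_m \le cm$ for the lower bound $\min\{m:a_m\ge n\}\ge n/c$, and $a_m \ge (c-\varepsilon)m$ for large $m$ for the matching upper bound — gives $\omega(\Tens(G),\Tens(k)) = 1/c$, so $\omega(\Tens(G),\Tens(k))^{-1} = c$. (In the degenerate case $c=0$ one checks $a_m \equiv 0$, so $\omega(\Tens(G),\Tens(k)) = \infty$ and its reciprocal is $0 = c$, consistent with disconnected $G$.)

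For the right-hand side I would argue: if $\beta < c$ then $a_m \ge \beta m$ for all large $m$, and interpolating a general $n$ between consecutive powers of $2$ via the monotonicity of $f$ yields $f(n) = \Omega(n^\beta)$; conversely, $f(n) = \Omega(n^\beta)$ forces $a_m \ge \beta m - \Oh(1)$ and hence $\beta \le c$. So $\sup\{\beta : \Tens_n(G) \ge \Tens_{\Omega(n^\beta)}(k)\} = c$ as well, and the two sides agree. The only genuinely delicate point is the translation between the ``dyadic'' world built into $\omega(s,t)$ (which only ever pits $\Tens_{2^m}(G)$ against $\Tens_{2^n}(k)$) and the ``all $n$'' formulation of the right-hand side; monotonicity of $f$ is exactly the tool that bridges this, at the cost of a harmless multiplicative $\Oh(1)$ coming from rounding $\log_2 n$ to an integer. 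Everything else is the same Fekete-plus-inverse-function bookkeeping that already underlies Lemma~\ref{bigoh}, which is why this can be presented as a routine generalization of Strassen's results.
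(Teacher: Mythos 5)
Your proof is correct, and it follows the standard Strassen-style route (a supermultiplicative, monotone distillation function $f$, Fekete's lemma on $\log_2 f(2^m)$, and identification of both sides with the resulting rate $c$) — which is exactly what the paper has in mind, since it states this lemma without proof as a ``straightforward generalization of results by Strassen.'' The only care points — the dyadic-to-all-$n$ interpolation via monotonicity of $f$, and the degenerate case $c=0$ — are both handled explicitly in your write-up.
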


For any $n_1,n_2,n_3\in \NN$, the matrix multiplication tensor $\langle n_1,n_2,n_3 \rangle$ is defined as
\begin{multline*}
\langle n_1,n_2,n_3 \rangle \coloneqq \sum_{\mathclap{\substack{i \in [n_1]\times [n_2]\times [n_3]}}} (b_{i_1}\tightotimes b_{i_2})\otimes (b_{i_2}\tightotimes b_{i_3}) \otimes (b_{i_3} \tightotimes b_{i_1})\\[-0.5em]
\in (\CC^{n_1}\tightotimes \CC^{n_2}) \otimes (\CC^{n_2}\tightotimes \CC^{n_3}) \otimes (\CC^{n_3} \tightotimes \CC^{n_1}).
\end{multline*}
So $\langle n,n,n\rangle$ equals $\Tens_n(C_3)$, and $\langle n_1, n_2, n_3\rangle$ may be thought of as the tensor corresponding to a triangle with edges ``weighted'' by $n_1, n_2, n_3$.
For any real numbers $\gamma_1, \gamma_2, \gamma_3\geq 0$, define
\begin{equation}\label{dual}
\omega(\gamma_1,\gamma_2,\gamma_3) \coloneqq \inf \{\beta \in \RR \mid \rank(\langle \lfloor n^{\gamma_1} \rfloor,\lfloor n^{\gamma_2} \rfloor,\lfloor n^{\gamma_3} \rfloor\rangle) = \Oh(n^\beta)\}.
\end{equation}
By Lemma\nobreakspace \ref {bigoh}, the definition of $\omega(\gamma_1,\gamma_2,\gamma_3)$ in \eqref{dual} agrees with the definition of~$\omega$ in \eqref{omega} in the sense that $\omega(\langle 2,2,2 \rangle) = \omega(1,1,1)$.
Define the \defin{dual exponent of matrix multiplication} $\alpha$ by
\begin{equation}\label{dualexponent}
\alpha \coloneqq \sup \{ \gamma\in \RR \mid \omega(1,1,\gamma) = 2\}.
\end{equation}
The currently best bounds on this number are $0.3029805 < \alpha \leq 1$ \cite{le2012faster}. The number $\omega$ equals 2 if and only if $\alpha$ equals~1. The dual exponent $\alpha$ will turn out to be useful in combination with tensor surgery. 

We will use the following straightforward property of $\omega(\gamma_1, \gamma_2, \gamma_3)$.

\begin{lemma}\label{scaling} Let $\gamma_1, \gamma_2, \gamma_3, \delta \geq 0$ be real numbers. Then
\[
\omega(\delta\gamma_1, \delta\gamma_2, \delta\gamma_3) = \delta \exponent(\gamma_1, \gamma_2, \gamma_3).
\]
\end{lemma}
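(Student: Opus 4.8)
The plan is to establish the single inequality $\omega(\delta\gamma_1,\delta\gamma_2,\delta\gamma_3)\le\delta\,\omega(\gamma_1,\gamma_2,\gamma_3)$ for every $\delta>0$ and all $\gamma_1,\gamma_2,\gamma_3\ge0$, and then to deduce the reverse inequality for free by applying this same bound to the triple $(\delta\gamma_1,\delta\gamma_2,\delta\gamma_3)$ with scaling factor $1/\delta$. The degenerate case $\delta=0$ I would treat directly: $\langle\floor{n^{0}},\floor{n^{0}},\floor{n^{0}}\rangle=\langle 1,1,1\rangle$ is a rank-one tensor, so $\omega(0,0,0)=0$, whereas the schoolbook algorithm gives $\rank(\langle a,b,c\rangle)\le abc$ and hence $\omega(\gamma_1,\gamma_2,\gamma_3)\le\gamma_1+\gamma_2+\gamma_3<\infty$, so $0\cdot\omega(\gamma_1,\gamma_2,\gamma_3)=0$ as well.

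For the core inequality I would fix $\delta>0$ and a real number $\beta$ with $\rank(\langle\floor{n^{\gamma_1}},\floor{n^{\gamma_2}},\floor{n^{\gamma_3}}\rangle)=\Oh(n^{\beta})$; since the rank of a nonzero tensor is at least $1$ one has $\beta\ge0$. Given $n$, set $m\coloneqq\lceil n^{\delta}\rceil\in\NN$. As $n^{\delta}\le m$ and $x\mapsto x^{\gamma_i}$ is nondecreasing on $\RR_{\ge 0}$, we get $n^{\delta\gamma_i}=(n^{\delta})^{\gamma_i}\le m^{\gamma_i}$ and hence $\floor{n^{\delta\gamma_i}}\le\floor{m^{\gamma_i}}$ for $i=1,2,3$. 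Enlarging the three edge weights of the triangle tensor yields a tensor that restricts (via coordinate projections on each leg) to the smaller one, and restriction never increases tensor rank, so
\[
\rank(\langle\floor{n^{\delta\gamma_1}},\floor{n^{\delta\gamma_2}},\floor{n^{\delta\gamma_3}}\rangle)\le\rank(\langle\floor{m^{\gamma_1}},\floor{m^{\gamma_2}},\floor{m^{\gamma_3}}\rangle)=\Oh(m^{\beta}).
\]
Since $m\le n^{\delta}+1\le 2n^{\delta}$ for $n\ge1$ and $\beta\ge0$, this is $\Oh(n^{\delta\beta})$, so $\delta\beta$ belongs to the set in \eqref{dual} defining $\omega(\delta\gamma_1,\delta\gamma_2,\delta\gamma_3)$; taking the infimum over all admissible $\beta$ gives $\omega(\delta\gamma_1,\delta\gamma_2,\delta\gamma_3)\le\delta\,\omega(\gamma_1,\gamma_2,\gamma_3)$.

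Finally, applying the core inequality to $(\delta\gamma_1,\delta\gamma_2,\delta\gamma_3)$ with scaling factor $1/\delta>0$ gives $\omega(\gamma_1,\gamma_2,\gamma_3)\le\tfrac1\delta\,\omega(\delta\gamma_1,\delta\gamma_2,\delta\gamma_3)$, i.e.\ $\delta\,\omega(\gamma_1,\gamma_2,\gamma_3)\le\omega(\delta\gamma_1,\delta\gamma_2,\delta\gamma_3)$, and combined with the previous step this proves equality for $\delta>0$; the case $\delta=0$ was already handled. I do not expect a genuine obstacle here: the only delicate point is that $n$ ranges over $\NN$, so one cannot literally substitute $n=m^{1/\delta}$, and the rounding $m=\lceil n^{\delta}\rceil$ together with the monotonicity of $\rank(\langle\cdot,\cdot,\cdot\rangle)$ in its three arguments is exactly what makes this substitution rigorous; one must also remember that $\omega(\gamma_1,\gamma_2,\gamma_3)$ is finite, so that the identity $0\cdot\omega(\gamma_1,\gamma_2,\gamma_3)=0$ makes sense.
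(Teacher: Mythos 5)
Your proof is correct and uses essentially the same argument as the paper: monotonicity of $\rank(\langle\cdot,\cdot,\cdot\rangle)$ in its arguments combined with the rounding $m=\lceil n^{\delta}\rceil$ (the paper's $N$ with $n^{\delta}\leq N\leq n^{\delta}+1$). Your only departures are organizational and slightly tidier — deriving the reverse inequality by applying the forward one with factor $1/\delta$ instead of repeating the rounding argument, and explicitly disposing of the degenerate case $\delta=0$, which the paper's division by $\delta$ tacitly excludes.
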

\begin{proof}
Suppose $\omega(\gamma_1, \gamma_2, \gamma_3) < \beta$. Then by definition 
\[
\rank(\langle \floor{n^{\gamma_1}}, \floor{n^{\gamma_2}}, \floor{n^{\gamma_3}}\rangle) = \Oh(n^\beta)
\]
and thus
\[
\rank(\langle \floor{n^{\delta\gamma_1}},\floor{n^{\delta\gamma_2}},\floor{n^{\delta\gamma_3}} \rangle) \leq \rank(\langle \floor{N^{\gamma_1}},\floor{N^{\gamma_2}},\floor{N^{\gamma_3}} \rangle) = \Oh(n^{\delta \beta})
\]
with $n^\delta \leq N \leq n^\delta + 1$ an integer. So $\omega(\delta\gamma_1, \delta\gamma_2, \delta\gamma_3) < \delta \beta$. Conversely, suppose that $\omega(\delta\gamma_1, \delta\gamma_2, \delta\gamma_3) < \beta$. Then by definition
\[
\rank(\langle \floor{n^{\delta\gamma_1}},\floor{n^{\delta\gamma_2}},\floor{n^{\delta\gamma_3}} \rangle) = \Oh(n^{\beta})
\]
and thus
\[
\rank(\langle \floor{N^{\gamma_1}}, \floor{N^{\gamma_2}}, \floor{N^{\gamma_3}} \rangle) \leq \rank(\langle \floor{n^{\delta\gamma_1}},\floor{n^{\delta\gamma_2}},\floor{n^{\delta\gamma_3}} \rangle) = \Oh(N^{\beta/\delta}),
\]
where $n$ is the smallest integer such that $N\leq n^\delta$. So $\omega(\gamma_1, \gamma_2, \gamma_3) < \beta/\delta$.
\end{proof}

\subsection{Lower bound methods}
Let $t\in \CC^{n_1} \otimes \cdots \otimes \CC^{n_k}$.
A flattening of $t$ is a grouping of the tensor legs into two groups as to obtain a matrix $A_t$. The flattening of a simple tensor is a simple matrix (a rank-1 matrix). Therefore, the rank of the flattening matrix $A_t$ is a lower bound for the (border) rank of the tensor $t$,
\[
\rank(A_t) \leq \borderrank(t) \leq \rank(t).
\]
Recall that for matrices, rank is multiplicative under taking the tensor product. Therefore, by  Lemma\nobreakspace \ref {power},
\[
\log_2 \rank(A_t) = \omega(A_t) \leq \omega(t).
\]

Let $G = (V,E)$ be a graph. A cut of $G$ is a partition of $V$ into two disjoint sets. The size of a cut is the number of edges crossing the cut. A maximum cut is a cut of maximum size. Let $f(G)$ denote the size of a maximum cut of~$G$. 
Let $V = V_1 \sqcup V_2$ be a cut of $G$ of maximum size $f(G)$.
Flattening the tensor $\Tens_n(G)$ along the cut yields the matrix
\[
A  = \sum_{i\in [n]^E} \Bigl(\bigotimes_{u \in V_1} \Bigl(\bigotimes_{\substack{e\in E:\\ u\in e}} b_{i_e}\Bigr)\Bigr) \otimes \Bigl(\bigotimes_{v \in V_2} \Bigl(\bigotimes_{\substack{e\in E:\\ v\in e}} b_{i_e}\Bigr)\Bigr).
\]
The rank of $A$ equals $n^{f(G)}$. Therefore
\begin{equation}\label{flatlb}
n^{f(G)} = \rank(A) \leq \borderrank(\Tens_n(G)) \leq \rank(\Tens_n(G)),
\end{equation}
In \eqref{flatlb}, taking $n=2$ and taking the logarithm $\log_2$, yields the following inequalities of graph parameters,
\begin{equation}\label{graphparam}
f(G) \leq \omega(\Tens(G)) \leq \log_2 \rank(\Tens(G)) \leq |E(G)|.
\end{equation}
For bipartite graphs, each inequality in \eqref{graphparam} is an equality.
For odd cycles we get the flattening lower bounds $n^{k-1}\leq \rank(\Tens_n(C_k))$ and $k-1\leq \omega(\Tens_2(C_k))$. There exist more sophisticated flattenings called Young flattenings \cite{landsberg2011new}, which in our language correspond to a sophisticated splitting of a vertex before flattening. Young flattenings were used in \cite{buhrman2016nondeterministic} to show that the flattening lower bound on the border rank of $\Tens_n(C_k)$ is not tight for odd $k$. 
However, we do not know of a Young flattening that improves the asymptotic maximum cut lower bound $f(G) \leq \omega(\Tens(G))$.

\section{Tensor surgery on cycles}\label{cycles}

In this section we will prove upper bounds on the tensor rank and exponent of cycle tensors using tensor surgery.

\subsection{Tensor rank}
Let $t = t^1\otimes \cdots \otimes t^k$ be a simple $k$-tensor in $\bigotimes_{j=1}^k (\CC^{a_j}\tightotimes \CC^{b_j})$. Then, for any $j$, we define the local rank $\rank_{\smash{\raisebox{-2pt}{$\scriptstyle\CC^{a_j}\otimes \CC^{b_j}$}}}(t^j)$ of $t^j$ to be the rank of $t^j$ as an element of $\CC^{a_j} \otimes \CC^{b_j}$.


\begin{theorem}
\label{explicit}
For any odd number $k\geq3$, the tensor rank of the tensor corresponding to the cycle graph $C_k$ is upper bounded by 
\[
\rank(\Tens(C_k)) \leq 2^k - 1.
\]
Moreover, $\Tens(C_k)$ has a decomposition that consists of $2^{k}-2$ simple summands whose first tensor leg has local rank $1$ and one simple summand whose first tensor leg has local rank $2$.
\end{theorem}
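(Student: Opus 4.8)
The plan is to realise $\Tens(C_{k+2})$ as the image of $\Tens(C_k)$ under a surgery map that splits the first tensor leg into two legs and inserts one new vertex together with two new edges --- precisely the map $\psi$ from the $C_3\leadsto C_5$ example in the introduction --- and then to prove the statement by induction on odd $k$, carrying along the refined bookkeeping on local ranks.

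First I would fix the surgery. Let $\phi\colon \CC^2\tightotimes\CC^2\to(\CC^2\tightotimes\CC^2)^{\otimes 3}$ be the linear map $u\otimes v\mapsto\sum_{j\in\{0,1\}^2}(u\tightotimes b_{j_1})\otimes(b_{j_1}\tightotimes b_{j_2})\otimes(b_{j_2}\tightotimes v)$ and let $\psi_k\colon(\CC^2\tightotimes\CC^2)^{\otimes k}\to(\CC^2\tightotimes\CC^2)^{\otimes(k+2)}$ apply $\phi$ to the first tensor leg. Substituting $\phi$ into the first leg $b_{i_1}\tightotimes b_{i_2}$ of the defining sum of $\Tens(C_k)$ and relabelling summation indices yields $\psi_k(\Tens(C_k))=\Tens(C_{k+2})$; this bookkeeping identity is the step that needs the most care, though it is the same computation already carried out for $k=3$.

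Next I would record two properties of $\phi$. For any $u,v\in\CC^2$ the expression $\phi(u\tightotimes v)=\sum_{j\in\{0,1\}^2}(u\tightotimes b_{j_1})\otimes(b_{j_1}\tightotimes b_{j_2})\otimes(b_{j_2}\tightotimes v)$ is already a sum of four simple $3$-tensors, each with first tensor leg $u\tightotimes b_{j_1}$ of local rank $1$. On the other hand $\phi(b_{00}+b_{11})=\sum_{i\in\{0,1\}^3}(b_{i_1}\tightotimes b_{i_2})\otimes(b_{i_2}\tightotimes b_{i_3})\otimes(b_{i_3}\tightotimes b_{i_1})=\Tens(C_3)$, and Strassen's decomposition writes this as a sum of seven simple $3$-tensors, six of whose first legs have local rank $1$ and one --- the summand $(b_{00}+b_{11})^{\otimes 3}$ --- whose first leg has local rank $2$.

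Finally I would induct on odd $k\ge 3$ with the strengthened hypothesis that $\Tens(C_k)$ has a decomposition into $2^k-2$ simple summands whose first leg has local rank $1$, together with the single summand $(b_{00}+b_{11})^{\otimes k}$ (first leg of local rank $2$). The base case $k=3$ is Strassen's decomposition. For the inductive step, apply $\psi_k$: by linearity each local-rank-$1$ summand $(u\tightotimes v)\otimes t^2\otimes\cdots\otimes t^k$ maps to $\phi(u\tightotimes v)\otimes t^2\otimes\cdots\otimes t^k$, which the first property above expands into four simple $(k+2)$-tensors of first-leg local rank $1$, while $(b_{00}+b_{11})^{\otimes k}$ maps to $\Tens(C_3)\otimes(b_{00}+b_{11})^{\otimes(k-1)}$, which the second property expands into six simple $(k+2)$-tensors of first-leg local rank $1$ together with $(b_{00}+b_{11})^{\otimes(k+2)}$. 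Since $\psi_k(\Tens(C_k))=\Tens(C_{k+2})$, this produces a decomposition of $\Tens(C_{k+2})$ into $4(2^k-2)+6=2^{k+2}-2$ simple summands of first-leg local rank $1$ and one summand of first-leg local rank $2$, giving $\rank(\Tens(C_{k+2}))\le 2^{k+2}-1$ and closing the induction. I expect the main obstacle to be not the counting but verifying the surgery identity and, above all, noticing the self-similarity that makes the induction close: the ``worst'' summand $(b_{00}+b_{11})^{\otimes 3}$ of Strassen's decomposition is exactly $\phi$ applied to a local-rank-$2$ element, so the refined hypothesis regenerates itself.
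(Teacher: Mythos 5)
Your proposal is correct and follows essentially the same route as the paper: induction on odd $k$ via the surgery map $\psi_k$, with Strassen's decomposition as the base case and the same local-rank bookkeeping ($4\cdot(2^k-2)+7=2^{k+2}-1$) in the inductive step. The only cosmetic difference is that you pin down the local-rank-$2$ summand as exactly $(b_{00}+b_{11})^{\otimes k}$, whereas the paper only tracks its local rank and invokes $\phi(t^1_i)\cong\Tens(C_3)$ up to local $\GL$-action; both versions close the induction.
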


\begin{proof}
We prove the statement by induction on odd $k\geq 3$.
If $k = 3$, then with notation as in the introduction, Strassen's decompositions is
\begin{alignat*}{1}
\Tens(C_3)\,\, =\quad &-\,\, b_{\textsf{--}0}\otimes b_{0\textsf{+}} \otimes b_{11}\quad -\,\, b_{11}\otimes b_{\textsf{--}0} \otimes b_{0\textsf{+}}\quad -\,\, b_{0\textsf{+}}\otimes b_{11}\otimes b_{\textsf{--}0}\\[0.3em]
&+\,\, b_{\textsf{--}1}\otimes b_{1\textsf{+}}\otimes b_{00}\quad +\,\, b_{00}\otimes b_{\textsf{--}1}\otimes b_{1\textsf{+}}\quad +\,\, b_{1\textsf{+}}\otimes b_{00}\otimes b_{\textsf{--}1}\\[0.3em]
&+\,\,(b_{00}+b_{11})\otimes (b_{00}+b_{11})\otimes (b_{00}+b_{11}),
\end{alignat*}
so the statement of the theorem holds.
%
%
Assume that the statement holds for~$k = \ell$. This means that $\Tens(C_\ell) = \sum_{i=1}^{2^{\ell}-1} t^1_i \otimes \cdots \otimes t^\ell_i$ for some $t^j_i \in \CC^2 \otimes \CC^2$ such that 
\[
\#\{i \mid \rank_{\CC^2\otimes \CC^2}(t^1_i) = 1\} = 2^\ell - 2 \quad\textnormal{ and }\quad \#\{i \mid \rank_{\CC^2\otimes \CC^2}(t^1_i) = 2\} = 1.
\]
Define the linear map $\phi$ by
\begin{alignat*}{2}
\phi{}:{} &\CC^{2}\otimes \CC^{2} \to (\CC^2 \otimes \CC^2)^{\otimes 3} \\
&u \otimes v\mapsto \hspace{-1ex}\sum_{j\in\{0,1\}^2}\hspace{-1ex} (u \tightotimes b_{j_1}) \otimes (b_{j_1} \tightotimes b_{j_2}) \otimes (b_{j_2}\tightotimes v).
\end{alignat*}
Let $\psi_\ell : (\CC^2\tightotimes \CC^2)^{\otimes \ell} \to  (\CC^2\tightotimes \CC^2)^{\otimes \ell+2}$ be the map that applies $\phi$ at the first tensor leg. Then 
\[
\Tens(C_{\ell+2}) = \psi_\ell(\Tens(C_\ell)) = \sum_{i=1}^{\smash{2^\ell-1}} \phi(t^1_i) \otimes t^2_i \otimes \cdots \otimes t^\ell_i.
\]
If $\rank_{\CC^2\otimes \CC^2}(t^1_i) = 1$, then $\phi(t^1_i)$ has a decomposition of size  4 such that for every simple summand the first tensor leg has local rank 1. If $\rank_{\CC^2\otimes \CC^2}(t^1_i) = 2$, then $\phi(t^1_i)\cong\Tens(C_3)$ has a decomposition of size~7 such that for six simple summands the first tensor leg has local rank 1, while for one simple summand the first tensor leg has local rank 2. We conclude that $\Tens(C_{\ell+2})$ has rank at most $(2^{\ell}-2)4 + 1\cdot 7 = 2^{\ell+2}-1$. Moreover, $\Tens(C_{\ell+2})$ has a decomposition  that consists of $2^{\ell+2}-2$ simple summands whose first tensor leg has local rank 1 and one simple summand whose first tensor leg has local rank~2. We conclude that the statement of the theorem holds for $k = \ell+2$. 
%
\end{proof}

\begin{remark}\label{openquestion}
Before moving on, let us say something about lower bounds on the tensor rank $\rank(\Tens(C_k))$.
For any $k$, instead of flattening $\Tens(C_k)$ to a matrix, we can flatten $\Tens(C_k)$ to the 3-tensor $\langle 2,2,2^{k-2}\rangle$. As mentioned in \cite{buhrman2016nondeterministic}, by taking a Young flattening of the latter, we get the lower bound
\[
2^k-2^{k-2} + 1 \leq \borderrank(\Tens(C_k)).
\]
Applying the rank lower bound $\rank(\langle n,n,m\rangle) \geq 2mn +2n -m -2$ for $m\geq n\geq 3$ of \cite{DBLP:journals/jc/Blaser03} to $\langle 2,2,2^{k-2}\rangle$ gives
\begin{equation}\label{bl}
2^k-2^{k-2} + 2 \leq \rank(\Tens(C_k)).
\end{equation}
%
For the triangle graph, Strassen already showed that $\rank(\Tens(C_3)) \leq 7$ \cite{strassen1969gaussian} and  Winograd showed that $\rank(\Tens(C_3)) \geq 7$ \cite{winograd1971multiplication}. Only quite recently Landsberg showed that also $\borderrank(\Tens(C_3)) \geq 7$ \cite{landsberg2006border}.
For the next smallest interesting case $\Tens(C_5)$, Theorem\nobreakspace \ref {explicit} brings the rank and border rank in the following ranges:
\begin{align*}
24 &\leq \borderrank(\Tens(C_5)) \leq 31,\\
25 &\leq \rank(\Tens(C_5)) \leq 31.
\end{align*}
\end{remark}

\begin{remark}
We mention that the decomposition of $\Tens(C_5)$ given by the proof of Theorem\nobreakspace \ref {explicit} is different from the decomposition given in \cite{buhrman2016nondeterministic} in the sense of De~Groote's work~\cite{de1978varieties}, that is, the decompositions can not be transformed into each other by sandwiching and cyclic permutation. This is because the local ranks of the summands are incompatible.
\end{remark}

In general, with tensor surgery we can transform decompositions of matrix multiplication tensors $\langle n_1, n_2, n_3 \rangle$ to decompositions of $\Tens_n(C_k)$. We will illustrate this with $\Tens_4(C_5)$. We make use of the bounds $\rank(\langle 4,4,2\rangle) \leq 26$, $\rank(\langle 4, 4, 4\rangle) \leq 49$ and $\borderrank(\langle 4,4,2\rangle) \leq 24$, $\borderrank(\langle 4,4,4 \rangle) \leq 46$, see \cite{hopcroft1971minimizing,smirnov2013bilinear}. First note that $\rank(\Tens_4(C_5)) \leq \rank(\Tens_2(C_5))^2 \leq 31^2$.

\begin{proposition}\label{further}
$\rank(\Tens_4(C_5)) \leq 937 < 31^2$ \!and\, $\borderrank(\Tens_4(C_5)) \leq 910$.
\end{proposition}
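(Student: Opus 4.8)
The plan is to run the same tensor-surgery argument as in Theorem~\ref{explicit}, but starting from (and inserting) \emph{rectangular} matrix multiplication tensors rather than the $2\times2$ one, exactly as the paragraph preceding the proposition advertises. Concretely, I would realize $\Tens_4(C_5)$ as the image of a surgery map applied to $\langle 4,4,4\rangle = \Tens_4(C_3)$. Mimicking the introduction's $\phi$ and $\psi$, take the linear map $\phi\colon \CC^4\otimes\CC^4 \to (\CC^4\otimes\CC^2)\otimes(\CC^2\otimes\CC^2)\otimes(\CC^2\otimes\CC^4)$ (or with an appropriate split of the inserted path's internal dimensions) sending $u\otimes v \mapsto \sum_{j} (u\tightotimes b_{j_1})\otimes(b_{j_1}\tightotimes b_{j_2})\otimes(b_{j_2}\tightotimes v)$, and let $\psi$ apply $\phi$ at the first leg. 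Then $\psi(\Tens_4(C_3)) = \Tens_4(C_5)$ up to local $\GL$-equivalence: vertex $1$ of $C_3$ is split into two vertices joined by a path of length two, turning the triangle into a five-cycle, and the edges incident to the split vertex keep their weight $4$ while the two new edges of the inserted path carry weight $2$.

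Next I would take a good decomposition of $\langle 4,4,4\rangle$ of size $\rank(\langle 4,4,4\rangle)\le 49$ (Smirnov/Hopcroft--Kerr, cited in the excerpt) and push it through $\psi$. Each simple summand $t^1_i\otimes t^2_i\otimes t^3_i$ with $t^1_i\in\CC^4\otimes\CC^4$ maps to $\phi(t^1_i)\otimes t^2_i\otimes t^3_i$, and the cost is governed by the local rank $r_i := \rank_{\CC^4\otimes\CC^4}(t^1_i)$: when $t^1_i = x\otimes y$ has local rank $1$, $\phi(t^1_i)$ is (equivalent to) $\langle 1,1,1,\dots\rangle$-type and decomposes into $\langle 4,2,1\rangle$-many — more carefully, $\phi(x\otimes y)=\sum_j (x\tightotimes b_{j_1})\otimes(b_{j_1}\tightotimes b_{j_2})\otimes(b_{j_2}\tightotimes y)$ is the "flattened path" tensor whose rank as a $3$-tensor I should bound (it is at most the number of internal index values, here $4$, after absorbing the rank-$1$ legs $x,y$); for a general $t^1_i$ of local rank up to $4$, $\phi(t^1_i)$ becomes (a restriction of) $\langle 4,4,2\rangle$-ish and costs $\rank(\langle 4,4,2\rangle)\le 26$. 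To make the bookkeeping work I need the analogue of the "local-rank profile" tracked in Theorem~\ref{explicit}: how many summands of the chosen $\langle 4,4,4\rangle$-decomposition have first-leg local rank $1$, how many have local rank $2,3,4$. Summing $\sum_i \rank_{3\text{-tensor}}(\phi(t^1_i))$ over the $49$ summands, weighted by these local-rank classes, should give the claimed $937$. For border rank I would repeat verbatim with $\borderrank(\langle 4,4,4\rangle)\le 46$ and $\borderrank(\langle 4,4,2\rangle)\le 24$, noting that border rank is also subadditive and monotone under restriction, yielding $\le 910$.

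The main obstacle is the combinatorial accounting of first-leg local ranks in the explicit $49$-term decomposition of $\langle 4,4,4\rangle$ (and the $26$-term one of $\langle 4,4,2\rangle$): the bound $937$ is not simply $49\cdot(\text{something})$, so one must either inspect the published Smirnov/Hopcroft--Kerr decompositions to count how many summands have a rank-$1$ first leg, or — more robustly — build the $\langle 4,4,4\rangle$-decomposition itself by tensor surgery from smaller pieces (e.g.\ from $\langle 2,2,2\rangle^{\otimes 2}$ or from $\langle 4,4,2\rangle$ and a further split) so that the local-rank profile is controlled by construction. A clean route: write $\langle 4,4,4\rangle$ via surgery on $\langle 4,4,2\rangle$, track that profile, and then surger once more to reach $C_5$; this reduces everything to understanding the local-rank profile of a single rectangular decomposition. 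I also need to double-check that the $\CC^4\otimes\CC^4$ leg being split is compatible with the leg along which $\langle 4,4,4\rangle$'s decomposition has the desired structure (a cyclic permutation of the three legs is harmless by symmetry of $\Tens_4(C_3)$). Once the profile numbers are pinned down, the inequality $\rank(\Tens_4(C_5))\le 937$ and $\borderrank(\Tens_4(C_5))\le 910$ follow by the same one-line summation as in Theorem~\ref{explicit}, and $937<961=31^2$ is immediate.
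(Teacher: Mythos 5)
Your overall skeleton is the paper's: realize $\Tens_4(C_5)=\psi(\langle 4,4,4\rangle)$ by splitting one leg and inserting a path, then push a $49$-term decomposition of $\langle 4,4,4\rangle$ through $\psi$, charging each summand according to the local rank of its first leg and using $\rank(\langle 4,4,2\rangle)\le 26$, $\rank(\langle 4,4,4\rangle)\le 49$ (resp.\ $24$ and $46$ for border rank). But there are two concrete gaps. First, your $\phi$ inserts a path whose two new edges have weight $2$, landing in $(\CC^4\tightotimes\CC^2)\otimes(\CC^2\tightotimes\CC^2)\otimes(\CC^2\tightotimes\CC^4)$; that produces a five-cycle with edge weights $4,2,2,4,4$, not $\Tens_4(C_5)$, in which every edge has weight $4$. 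The correct map sums over $j\in[4]^2$, i.e.\ $\phi:\CC^4\tightotimes\CC^4\to(\CC^4\tightotimes\CC^4)^{\otimes 3}$. This changes your per-summand costs: for a first leg of local rank $1$, $\phi(t^1_i)$ has rank $4^2=16$ (the number of internal index pairs), not $4$; for local rank $2$ it is $\cong\langle 4,4,2\rangle$ (cost $26$, resp.\ $24$); for local rank $4$ it is $\cong\langle 4,4,4\rangle$ (cost $49$, resp.\ $46$). With your numbers the total cannot come out to $937$.

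Second, you leave the local-rank profile of the $49$-term decomposition as "the main obstacle," to be settled by inspecting Smirnov/Hopcroft--Kerr or by further surgery. The paper's resolution is the simple option you mention only in passing: take $\langle 4,4,4\rangle=\langle2,2,2\rangle^{\otimes 2}$ with the square of Strassen's decomposition. Since Strassen's decomposition has $6$ summands with first-leg local rank $1$ and one with local rank $2$ (the profile already recorded in Theorem~\ref{explicit}), the squared decomposition has $6^2=36$ summands of local rank $1$, $6+6=12$ of local rank $2$, and $1$ of local rank $4$. Then $36\cdot 16+12\cdot 26+1\cdot 49=937$ and $36\cdot 16+12\cdot 24+1\cdot 46=910$. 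Without fixing the weight of the inserted edges and actually executing this profile count, the claimed constants are not established.
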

\begin{proof}
Let $\phi$ be the linear map $\CC^4\tightotimes \CC^4 \mapsto (\CC^4\tightotimes \CC^4)^{\otimes 3}$ defined on simple tensors by $u \otimes v \mapsto \sum_{j\in[4]^2} (u\tightotimes b_{j_1})\otimes (b_{j_1} \tightotimes b_{j_2})\otimes(b_{j_2}\tightotimes v)$, and let $\psi$ be the linear map $(\CC^4\tightotimes \CC^4)^{\otimes 3} \to (\CC^4\tightotimes \CC^4)^{\otimes 5}$ which applies $\phi$ to the first tensor leg. Then $\Tens_4(C_5)$ equals $\psi(\langle 4,4,4 \rangle)$. Taking the tensor square of Strassen's decomposition gives a decomposition $\sum_{i=1}^{49} t^1_i \otimes \cdots \otimes t^k_i$ of $\langle 4,4,4\rangle$ such that
\begin{align*}
\#\{i \mid \rank_{\CC^2\otimes \CC^2}(t^1_i) = 1\} &= 6^2, \\
\#\{i \mid \rank_{\CC^2\otimes \CC^2}(t^1_i) = 2\} &= 6+6, \\
\#\{i \mid \rank_{\CC^2\otimes \CC^2}(t^1_i) = 4\} &= 1 .
\end{align*}
If $\rank_{\CC^2\otimes \CC^2}(t^1_i) = 1$, then $\phi(t^1_i)$ has rank $4^2$. If $\rank_{\CC^2\otimes \CC^2}(t^1_i) = 2$, then $\phi(t^1_i) \cong \langle 4,4,2 \rangle$ has rank at most 26. If $\rank_{\CC^2\otimes \CC^2}(t^1_i) = 4$, then $\phi(t^1_i) \cong \langle 4,4,4 \rangle$ has rank at most 49.
Therefore, applying $\psi$ to the simple summands $t^1_i\otimes \cdots \otimes t^k_i$ we obtain $\rank(\Tens_4(C_5)) \leq 6^2 \cdot 4^2 + 12 \cdot 26 + 1 \cdot 49 = 937$.

For the border rank, we have $\borderrank(\langle 4,4,2\rangle) \leq 24$ and $\borderrank(\langle 4,4,4 \rangle) \leq 46$, so that by the same argument $\borderrank(\Tens_4(C_5)) \leq 6^2 \cdot 4^2 + 12 \cdot 24 + 1 \cdot 46 = 910$.
\end{proof}

\subsection{Exponent}
In view of the lower bound \eqref{bl}, our tensor rank bounds might not look that strong. We will now see, however, that applying the same techniques in the asymptotic setting yields  optimal bounds, assuming $\omega=2$.
%
Let $\omega_k \coloneqq \omega(\Tens(C_k))$. 





\begin{theorem}\label{easy}
For $k,\ell$ odd, \,$\omega_{k+\ell-1} \leq \omega_k + \omega_\ell$.
\end{theorem}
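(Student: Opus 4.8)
The plan is to mimic the surgery construction used in the proof of Theorem~\ref{explicit}, but applied to the weighted triangle tensors $\langle n, n, n^{m}\rangle$ rather than the fixed tensor $\Tens(C_3)$, and then to pass to the asymptotic limit via Lemma~\ref{bigoh}. Fix odd $k,\ell$. For a parameter $n$, I would like to build a restriction of a product of cycle tensors down to $\Tens_n(C_{k+\ell-1})$. The key observation is that the surgery map $\psi$ used in the introduction, which ``inserts a path'' by splitting one tensor leg and gluing in extra vertices, realizes $\Tens(C_{k+\ell-1})$ from $\Tens(C_k)$ by inserting an $\ell$-cycle's worth of new structure. More precisely, gluing a $k$-cycle and an $\ell$-cycle along a single shared vertex (so that the shared vertex is split into the two ``endpoints'' of each arc) produces a $(k+\ell-1)$-cycle, and this gluing is exactly the composition of a leg-splitting with inserting the other cycle tensor.

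Concretely, first I would set up the surgery map: let $\phi_n\colon \CC^n\tightotimes\CC^n \to (\CC^n\tightotimes\CC^n)^{\otimes(\ell-1)}$ send $u\otimes v$ to $\sum_{j\in[n]^{\ell-2}} (u\tightotimes b_{j_1})\otimes(b_{j_1}\tightotimes b_{j_2})\otimes\cdots\otimes(b_{j_{\ell-2}}\tightotimes v)$, i.e.\ the map that replaces one leg by a path of $\ell-1$ legs. Let $\psi_n$ apply $\phi_n$ to the first tensor leg of a tensor in $(\CC^n\tightotimes\CC^n)^{\otimes k}$; then $\psi_n(\Tens_n(C_k)) = \Tens_n(C_{k+\ell-1})$, exactly as in the proof of Theorem~\ref{explicit} (the $\ell=3$ case there). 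Now take any decomposition of $\Tens_n(C_k)$ into simple summands $\sum_i t^1_i\otimes\cdots\otimes t^k_i$, and apply $\psi_n$ termwise. Each image summand is $\phi_n(t^1_i)\otimes t^2_i\otimes\cdots\otimes t^k_i$, and $\phi_n(t^1_i)$ depends on $t^1_i$ only through its local rank $r_i\coloneqq\rank_{\CC^n\otimes\CC^n}(t^1_i)$: up to $\GL$-equivalence, $\phi_n(t^1_i)\cong \langle r_i, n, n\rangle$ when $t^1_i$ has local rank $r_i$ — wait, more carefully, $\phi_n$ inserts a path of $\ell-2$ internal vertices each carrying full dimension $n$, so $\phi_n(t^1_i)$ is a restriction of $\Tens_n(C_{\ell-1})$-type structure with one weighted edge of weight $r_i$; since $r_i\le n$, this is a subtensor of $\Tens_n(C_{\ell-1})$, so $\rank(\phi_n(t^1_i))\le \rank(\Tens_n(C_{\ell-1}))$. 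This already gives $\rank(\Tens_n(C_{k+\ell-1}))\le \rank(\Tens_n(C_k))\cdot\rank(\Tens_n(C_{\ell-1}))$, but that is not what we want; we want the product $\omega_k\cdot\omega_\ell$ additively, i.e.\ $\omega_{k+\ell-1}\le\omega_k+\omega_\ell$, which after taking $\log_n$ of ranks means we want $\rank(\Tens_n(C_{k+\ell-1}))=\Oh(n^{\omega_k+\omega_\ell})$.

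To get the sharper bound I would instead exploit that in the induction of Theorem~\ref{explicit} most summands have $t^1_i$ of local rank $1$, for which $\phi_n(t^1_i)$ is itself simple — and more generally, summing over the distribution of local ranks, $\sum_i \rank(\phi_n(t^1_i))$ is governed by $\Tens_n(C_{\ell-1})$ expanded in the ``first-leg'' direction. The clean way: by Lemma~\ref{bigoh}, fix $\varepsilon>0$ and choose a decomposition of $\Tens_n(C_k)$ of size $\Oh(n^{\omega_k+\varepsilon})$; do the same with $\Tens_n(C_\ell)$ replaced by its own surgery so that summing $\sum_i \rank(\phi_n(t^1_i))$ over the worst case still costs only $\Oh(n^{\omega_\ell+\varepsilon})$ \emph{per summand on average is wrong}, so instead note $\sum_i \rank(\phi_n(t^1_i)) \le (\text{number of summands})\cdot \max_i\rank(\phi_n(t^1_i)) = \Oh(n^{\omega_k+\varepsilon})\cdot\Oh(n^{\omega_\ell+\varepsilon})$ only if $\rank(\Tens_n(C_{\ell-1}))=\Oh(n^{\omega_\ell})$, which is false in general. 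The right move is: replace $\phi_n$ by a map inserting the \emph{whole} $\ell$-cycle tensor rather than an $(\ell-1)$-path, so that $\phi_n(t^1_i)\cong$ a restriction of $\Tens_n(C_\ell)$ with one edge weighted by $r_i\le n$, giving $\rank(\phi_n(t^1_i))\le\rank(\Tens_n(C_\ell))=\Oh(n^{\omega_\ell+\varepsilon})$, and hence $\rank(\Tens_n(C_{k+\ell-1}))\le \sum_i\rank(\phi_n(t^1_i)) \le \rank(\Tens_n(C_k))\cdot\rank(\Tens_n(C_\ell)) = \Oh(n^{\omega_k+\omega_\ell+2\varepsilon})$. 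Taking $\log_n$, letting $n\to\infty$ and $\varepsilon\to0$ via Lemma~\ref{bigoh} yields $\omega_{k+\ell-1}\le\omega_k+\omega_\ell$.

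The main obstacle I anticipate is verifying the combinatorial identity $\psi_n(\Tens_n(C_k))=\Tens_n(C_{k+\ell-1})$ with the correct bookkeeping of which shared vertex of the $k$-cycle and the $\ell$-cycle is being glued, and checking that after gluing the two edges incident to the glued vertex in the $\ell$-cycle attach to the two freshly-split legs in the right order so that the result is genuinely a $(k+\ell-1)$-cycle and not some other graph. In particular one must confirm that inserting a cycle of length $\ell$ at a vertex (identifying that vertex with one vertex of the inserted cycle and splitting it) raises the cycle length by $\ell-1$, not $\ell$ or $\ell+1$; this is where an off-by-one could creep in, and it is worth drawing the picture exactly as in the introduction's $C_3\leadsto C_5$ example (there $k=3$, $\ell=3$, $k+\ell-1=5$). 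Everything else — the termwise application of $\psi_n$, the local-rank bound $\rank(\phi_n(t^1_i))\le\rank(\Tens_n(C_\ell))$, and the passage to the asymptotic statement — is routine given Lemma~\ref{bigoh} and the submultiplicativity already used implicitly in Theorem~\ref{explicit} and Proposition~\ref{further}.
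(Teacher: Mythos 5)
Your final argument coincides with the paper's proof: define the surgery map so that one leg of $\Tens_n(C_k)$ is replaced by $\ell$ legs (with $\ell-1$ fresh summation indices), note that each first leg $t^1_i$ has local rank at most $n$ so that $\phi(t^1_i)$ is a restriction of $\Tens_n(C_\ell)$ and hence has rank $\Oh(n^{\omega_\ell+\varepsilon})$, multiply by the $\Oh(n^{\omega_k+\varepsilon})$ summands, and conclude via Lemma~\ref{bigoh}. Your worry about the off-by-one is justified only for your \emph{first} definition of $\phi_n$ (with $\ell-1$ output legs, which would yield $C_{k+\ell-2}$); the corrected map you settle on — where the virtual edge carrying the local rank closes up an $\ell$-cycle — is exactly the paper's $\phi$ and the identity $\psi(\Tens_n(C_k))=\Tens_n(C_{k+\ell-1})$ then holds as in the $C_3\leadsto C_5$ example.
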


The idea of the proof is to take the $k$-cycle $C_k$, split one vertex in $C_k$ into two vertices and insert $\ell-2$ new vertices in the graph together with the appropriate $\ell-1$ edges in order to create the $(k+\ell-1)$-cycle. In pictures, for $k=5$ and $\ell=3$,
\[
\begin{minipage}{1.5cm}
\begin{tikzpicture}[vertex/.style = {circle, fill, black, minimum width = 1.mm, inner sep=0pt}]
    \path[coordinate] (0,0)  coordinate(A)
                ++( 2*1*36:0.8cm) coordinate(B)
                ++( 2*2*36:0.8cm) coordinate(C)
                ++( 2*3*36:0.8cm) coordinate(D)
                ++( 2*4*36:0.8cm) coordinate(E);
	\draw [line width=0.2mm] (D) node [vertex] {} -- (E) node [vertex] {} -- (A) node [vertex] {} -- (B) node [vertex] {} -- (C) node [vertex] {} -- (D);
\end{tikzpicture}
\end{minipage}
\quad\leadsto\quad
\begin{minipage}{1.6cm}
\begin{tikzpicture}[vertex/.style = {circle, fill, black, minimum width = 1.mm, inner sep=0pt}]
    \path[coordinate] (0,0)  coordinate(A)
                ++( 2*1*180/7+12.857142857:0.7cm) coordinate(B1)
                ++( 2*2*180/7+12.857142857:0.7cm) coordinate(B)
                ++( 2*3*180/7+12.857142857:0.7cm) coordinate(C)
                ++( 2*4*180/7+12.857142857:0.7cm) coordinate(D)
                ++( 2*5*180/7+12.857142857:0.7cm) coordinate(E)
                ++( 2*6*180/7+12.857142857:0.7cm) coordinate(F);
	\draw [line width=0.2mm, dashed, gray] (A) -- (B);
	\draw [line width=0.2mm] (D) node [vertex] {} -- (E) node [vertex] {} -- (F) node [vertex] {} -- (A) node [vertex] {} (B) node [vertex] {} -- (C) node [vertex] {} -- (D);

\end{tikzpicture}
\end{minipage}
\quad\leadsto\quad
\begin{minipage}{1.6cm}
\begin{tikzpicture}[vertex/.style = {circle, fill, black, minimum width = 1.mm, inner sep=0pt}]
    \path[coordinate] (0,0)  coordinate(A)
                ++( 2*1*180/7+12.857142857:0.7cm) coordinate(B1)
                ++( 2*2*180/7+12.857142857:0.7cm) coordinate(B)
                ++( 2*3*180/7+12.857142857:0.7cm) coordinate(C)
                ++( 2*4*180/7+12.857142857:0.7cm) coordinate(D)
                ++( 2*5*180/7+12.857142857:0.7cm) coordinate(E)
                ++( 2*6*180/7+12.857142857:0.7cm) coordinate(F);
	\draw [line width=0.2mm, dashed, gray] (A) -- (B);
	\draw [line width=0.2mm] (D) node [vertex] {} -- (E) node [vertex] {} -- (F) node [vertex] {} -- (A) node [vertex] {} -- (B1) node [vertex] {}  --  (B) node [vertex] {} -- (C) node [vertex] {} -- (D);
\end{tikzpicture}
\end{minipage}
\]
Next we consider an optimal decomposition of $\Tens_n(C_k)$. Not only inserting $\ell-1$ edges comes with a cost, but also splitting the vertex. The crucial observation is that the total cost is at most the cost of creating an $\ell$-cycle, which is asymptotically $\omega_\ell$.

\begin{proof}
Let $\phi$ be the linear map $\CC^{n}\tightotimes \CC^n \to (\CC^{n}\tightotimes \CC^n)^{\otimes \ell}$ defined on simple tensors by $u\otimes v \mapsto \sum_{j\in[n]^{\ell-1}} (u\tightotimes b_{j_1}) \otimes (b_{j_1} \tightotimes b_{j_2}) \otimes \cdots \otimes (b_{j_{\ell-1}}\tightotimes v)$, and let $\psi$ be the linear map $(\CC^n\tightotimes\CC^n)^{\otimes k} \to (\CC^n\tightotimes\CC^n)^{\otimes k+\ell-1}$ that applies~$\phi$ at the first tensor leg. Then $\Tens_n(C_{k+\ell-1}) = \psi(\Tens_n(C_k))$. Let $\varepsilon > 0$. Then there is a constant $c_\varepsilon\in\NN$ and a decomposition of $\Tens_n(C_k)$ as a sum of at most~$c_\varepsilon n^{\omega_k + \varepsilon}$ simple summands (Lemma\nobreakspace \ref {bigoh}). Consider one simple summand $t^1\otimes \cdots \otimes t^k$ in this decomposition. We have $\rank_{\CC^n\otimes \CC^n}(t^1)\leq n$ and hence $\phi(t^1) \leq \Tens_n(C_\ell)$. The rank of $\psi(t^1\otimes \cdots \otimes t^k)$ is therefore at most $d_\varepsilon n^{\omega_\ell + \varepsilon}$ for some constant $d_\varepsilon\in \NN$. We conclude that the rank of $\psi(\Tens_n(C_k))$ is at most $c_\varepsilon d_\varepsilon n^{\omega_k + \omega_\ell + 2\varepsilon}$, and thus $\omega_{k+\ell-1} \leq \omega_k + \omega_\ell$ (Lemma\nobreakspace \ref {bigoh}).
\end{proof}

\begin{corollary}
Let $k\geq 5$ odd. Then, $\omega_k \leq \omega_{k-2} + \omega_3$ and thus $\omega_k \leq \frac{k-1}{2} \omega$.
\end{corollary}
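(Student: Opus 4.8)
The plan is to obtain both statements as immediate consequences of Theorem~\ref{easy}. For the first inequality, I would instantiate Theorem~\ref{easy} with $\ell = 3$: since $3$ is odd and $k \geq 5$ is odd, the theorem applied to the pair $(k-2, 3)$ gives $\omega_{(k-2)+3-1} \leq \omega_{k-2} + \omega_3$, i.e.\ $\omega_k \leq \omega_{k-2} + \omega_3$, which is exactly the claimed bound.

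For the second inequality I would combine this recursion with the identity $\omega_3 = \omega(\Tens(C_3)) = \omega$, which is just the definition of the matrix multiplication exponent recalled in the preliminaries (recall $\Tens(C_3) = \langle 2,2,2\rangle$). Then I would iterate the recursion: for odd $k \geq 5$, telescoping $\omega_k \leq \omega_{k-2} + \omega_3 \leq \omega_{k-4} + 2\omega_3 \leq \cdots \leq \omega_3 + \tfrac{k-3}{2}\,\omega_3 = \tfrac{k-1}{2}\,\omega$, since there are $\tfrac{k-3}{2}$ reduction steps from $k$ down to $3$ and $\omega_3 = \omega$. Equivalently one can phrase this as an induction on odd $k$ with base case $\omega_3 = \omega = \tfrac{3-1}{2}\omega$ and inductive step $\omega_k \leq \omega_{k-2} + \omega_3 \leq \tfrac{k-3}{2}\omega + \omega = \tfrac{k-1}{2}\omega$.

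I do not expect any genuine obstacle: the corollary is a pure specialization and iteration of Theorem~\ref{easy}. The only point requiring a little care is the parity bookkeeping—one must note that $k$ odd forces $k-2$ odd, so that every cycle length produced during the iteration remains odd and Theorem~\ref{easy} stays applicable at each step—and the observation that $\omega_3$ is, by definition, the matrix multiplication exponent $\omega$.
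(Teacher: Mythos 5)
Your proposal is correct and is exactly the intended argument: the paper states this as an immediate corollary of Theorem~\ref{easy} (instantiated at $\ell=3$ with the pair $(k-2,3)$, then iterated using $\omega_3=\omega$) and gives no separate proof. Your parity bookkeeping and the count of $\tfrac{k-3}{2}$ telescoping steps are both right.
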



\begin{corollary}
If $\omega = 2$, then $\omega_k = k-1$ for all odd $k$.
\end{corollary}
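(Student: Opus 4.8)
The plan is to trap $\omega_k$ between a lower bound and an upper bound that coincide when $\omega = 2$. The upper bound is immediate from the preceding corollary: it gives $\omega_k \leq \tfrac{k-1}{2}\omega$ for every odd $k\geq 3$ (the case $k=3$ being the trivial $\omega_3 = \omega$), so under the hypothesis $\omega = 2$ this reads $\omega_k \leq k-1$. All the real content of this direction is already packaged into Theorem~\ref{easy} and its iteration — applying $\omega_{k} \leq \omega_{k-2} + \omega_3$ repeatedly down to $\omega_3 = \omega$ — so nothing new is needed here.

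For the matching lower bound I would invoke the flattening inequality \eqref{graphparam}, namely $f(G) \leq \omega(\Tens(G))$ for every graph $G$, together with the value of the maximum cut of an odd cycle. Since $C_k$ with $k$ odd is not bipartite, no partition $V = V_1 \sqcup V_2$ can have all $k$ edges crossing — such a partition would be a proper $2$-colouring of the cycle, which is impossible for odd length — hence $f(C_k) \leq k-1$; and a partition omitting exactly one edge of the cycle is easily exhibited, so $f(C_k) = k-1$ and $\omega_k \geq k-1$. (Equivalently one can argue directly as in \eqref{flatlb}: flattening $\Tens_n(C_k)$ along a cut of size $k-1$ yields a matrix of rank $n^{k-1}$, so $n^{k-1} \leq \rank(\Tens_n(C_k))$ and $\omega_k \geq k-1$ by Lemma~\ref{bigoh}.) Note this lower bound is unconditional; only the upper bound uses $\omega = 2$.

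Combining $k-1 \leq \omega_k \leq k-1$ gives $\omega_k = k-1$. I expect no genuine obstacle: the only point requiring a word of justification is the equality $f(C_k) = k-1$, i.e.\ the non-bipartiteness of odd cycles, and the entire substance of the result lives in the surgery bound of Theorem~\ref{easy}.
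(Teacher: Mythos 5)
Your argument is exactly the one the paper intends: the upper bound $\omega_k \le \tfrac{k-1}{2}\omega = k-1$ comes from iterating Theorem~\ref{easy}, and the unconditional lower bound $k-1 = f(C_k) \le \omega_k$ is the max-cut flattening bound already recorded in Section~\ref{prelim}. The proposal is correct and matches the paper's (implicit) proof.
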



\begin{remark}\label{localrank}
The proofs of Theorem\nobreakspace \ref {explicit} and Proposition\nobreakspace \ref {further} crucially relied on a careful local rank analysis of Strassen's decomposition and other decompositions of matrix multiplication tensors. The same technique may be applied in the asymptotic setting to improve the results of Theorem\nobreakspace \ref {easy}, in the following sense.  Suppose one has a specific upper bound for $\omega_k$ together with information about the local ranks in the corresponding decomposition of $\Tens_n(C_k)$ for any~$n$.  Then, when applying the surgery map~$\psi$ to such a decomposition, as in the proof of Theorem\nobreakspace \ref {easy}, one can use the specific local rank information instead of using the worst-case upper bound $\rank_{\CC^n \otimes \CC^n}(t^1)\leq n$, and thus obtain an improved asymptotic bound. 

The local rank viewpoint reveals an interesting fact about the decompositions of cycle tensors, which is also relevant for the asymptotic local rank analysis idea. Namely, take the tensor $\Tens_n(C_k)$ and let $\psi$ be the map that split one of the vertices,
\[
\psi : (\CC^n\otimes \CC^n)^{\otimes k} \to (\CC^n\otimes \CC^n)^{\otimes k-1} \otimes \CC^n \otimes \CC^n.
\]
Then $\psi(\Tens_n(C_k))=\Tens_n(L_k)$ where $L_k$ is the linear graph with $k$ edges, and hence we have  $\rank(\psi(\Tens_n(C_k))) = n^k$. Therefore, if $\Tens_n(C_k) = \sum_{i=1}^r t^1_i \otimes \cdots \otimes t^k_i$ is a decomposition into simple tensors, then for any $j\in [k]$ we have
\[
\sum_{i=1}^r \rank_{\CC^n\otimes \CC^n}(t^j_i) \geq n^k.
\]
Let $r = n^{\beta}$ and let $\Tens_n(C_k)=\sum_{i=1}^r t^1_i \otimes \cdots \otimes t^k_i$ be a decomposition. Then the average local rank at the $j$th leg is lower bounded by
\[
\frac{1}{r}\sum_{i=1}^r \rank_{\CC^n\otimes \CC^n}(t^j_i) \geq n^{k-\beta},
\]
while $\max_{i\in [r]} \rank_{\CC^n\otimes \CC^n}(t^j_i) \leq n$. If $\beta$ is close to  $k-1$, then the average local rank is close to the maximum. However, if $\beta$ is bounded away from~$k-1$ then there is a gap between average and maximum local rank, so that an improvement by local rank analysis as described above is possible.
\end{remark}

The following theorem gives an upper bound on $\omega_k$ in terms of the dual exponent of matrix multiplication~$\alpha$.

\begin{theorem}\label{asymp}
For any odd $k\geq 3$, $\omega_k \leq k - \alpha\bigl(1+ \frac{1-\alpha}{k-1+\alpha}\bigr) \leq k - \alpha$. 
\end{theorem}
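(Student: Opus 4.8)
The plan is to combine the surgery map from the proof of Theorem~\ref{easy} with a \emph{rectangular} matrix multiplication decomposition of the inserted path, exactly in the spirit of Remark~\ref{localrank}. Concretely, fix odd $k\ge 3$ and a parameter $m$ (to be chosen as a power of $n$, say $m = n^{\gamma}$ for a real $\gamma\ge 0$ optimised at the end). Instead of building $C_k$ from $C_3$ by repeated $\ell=3$ surgeries with all edge-weights equal to $n$, I would build $\Tens_n(C_k)$ from a single triangle tensor $\langle n, n, m\rangle$ by splitting the leg carrying the ``$m$'' bond and inserting a path of length $k-2$ whose interior edges all carry weight $m$ while the two new boundary edges carry weight $n$. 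The surgery map $\psi$ sends the simple summand $t^1\otimes\cdots\otimes t^3$ of a decomposition of $\langle n,n,m\rangle$ to $\phi(t^1)\otimes t^2\otimes t^3$, where $\phi$ inserts a weight-$m$ path; when $\mathrm{rank}_{\CC^n\otimes\CC^m}(t^1)=1$ the tensor $\phi(t^1)$ restricts to the unit tensor of rank $m^{k-2}$ (so it is cheap), and only the single ``worst-case'' summand of local rank $\min(n,m)$ forces the insertion of a full $(k-2)$-path tensor. The key point, as in Theorem~\ref{easy}, is that the splitting cost and the insertion cost together are dominated by the cost of an $\ell$-cycle tensor with appropriately weighted edges.

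Carrying this out, I would write the bound as a composition of three contributions: (i) $\omega(1,1,\gamma) = \omega_{\langle n,n,m\rangle}$, the cost of the starting tensor; (ii) $(k-2)\gamma$, coming from the $m^{k-2}$-sized unit-tensor blow-ups of the cheap summands, of which there are $\Oh(\mathrm{rank}(\langle n,n,m\rangle))$ many; and (iii) a lower-order contribution from the single expensive summand, which is absorbed into the $\Oh$. Summing, $\omega_k \le (k-2)\gamma + \omega(1,1,\gamma)$ for every $\gamma\ge 0$ — here I use Lemma~\ref{bigoh} to pass between $\mathrm{rank}(\Tens_n(C_k)) = \Oh(n^\beta)$ and $\omega_k\le\beta$, and Lemma~\ref{scaling} / the definition of $\omega(1,1,\gamma)$ to evaluate the starting cost. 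Now I invoke the definition of $\alpha$: for $0\le\gamma\le\alpha$ we have $\omega(1,1,\gamma)=2$, giving $\omega_k \le (k-2)\gamma + 2$; choosing $\gamma = \alpha$ gives $\omega_k \le k - 2 + (k-2)(\alpha - 1) \cdot 0$... — I need to be more careful about which leg gets the blow-up, so let me instead track it as: the cheap summands contribute weight $m^{k-1}/n$ type factors, and the honest optimisation over $\gamma$ combined with $\omega(1,1,\gamma)=2$ for $\gamma\le\alpha$ yields $\omega_k\le k-\alpha\bigl(1+\tfrac{1-\alpha}{k-1+\alpha}\bigr)$; the cruder bound $\omega_k\le k-\alpha$ is the value at the endpoint $\gamma=\alpha$ before the final fractional improvement, or equivalently follows by monotonicity since $\tfrac{1-\alpha}{k-1+\alpha}\ge 0$.

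The main obstacle is getting the bookkeeping of edge weights exactly right so that the interpolation parameter $\gamma$ appears symmetrically and the optimal choice produces the stated fraction $\frac{1-\alpha}{k-1+\alpha}$ rather than merely $k-\alpha$. This requires: first, checking that splitting the weight-$m$ leg of $\langle n,n,m\rangle$ and re-weighting so that the new outer path-edges carry $n$ (not $m$) is legitimate — i.e. that $\phi$ restricts $\langle n,n,m\rangle$ onto $\Tens_n(C_k)$ and not onto some tensor with the wrong bond dimensions; second, optimising $(k-2)\gamma + \omega(1,1,\gamma)$ over $\gamma$ using the bound $\omega(1,1,\gamma)\le 2 + \max(0,\gamma-\alpha)\cdot(\text{something})$ that follows from $\omega(1,1,1)=\omega\le 3$ and interpolation (Lemma~\ref{scaling}), and verifying the optimum sits at $\gamma=\alpha + \Theta(1/k)$, which is what produces the $\frac{1-\alpha}{k-1+\alpha}$ correction. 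I expect the first of these to be the genuine subtlety — one must split \emph{both} of the $n$-legs' worth of structure correctly — while the second is a short but delicate single-variable optimisation. Once both are in hand, Lemma~\ref{bigoh} finishes the argument, and the weaker inequality $\omega_k\le k-\alpha$ is immediate since the correction term is nonnegative and $\alpha\le 1$.
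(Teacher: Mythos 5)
There is a genuine gap, and it sits exactly where you suspected: the bookkeeping of edge weights. Your weighting is inverted relative to what works. If you split the $m$-leg of $\langle n,n,m\rangle$ and insert a path whose new bonds have dimension $m=n^\gamma$, each local-rank-one summand costs $m^{k-2}$ and the resulting tensor is a $k$-cycle with edge weights $(n,n,m,\dots,m)$ --- not $\Tens_n(C_k)$. Your intermediate claim $\omega_k\le (k-2)\gamma+\omega(1,1,\gamma)$ is therefore false as stated: for small $\gamma$ it would give $\omega_k$ close to $2$, contradicting the flattening bound $\omega_k\ge k-1$. Moreover this particular unbalanced cycle is useless: its max-cut lower bound is $n^2m^{k-3}$ while your upper bound is $n^2m^{k-2}$, and cyclically symmetrizing it recovers only the trivial $\omega_k\le k$. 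The correct construction keeps exactly \emph{one} light edge of weight $\lfloor n^\gamma\rfloor$ with $\gamma<\alpha$ and makes all other edges, including every inserted one, heavy of weight $n$. One then shows by induction that this tensor $\Tens_{n,\gamma}(C_k)$ has rank $\Oh(n^{k-1+\varepsilon})$: the split leg lives in $\CC^{\lfloor n^\gamma\rfloor}\otimes\CC^n$, so its local rank is at most $\lfloor n^\gamma\rfloor$ and each surgery step (inserting one vertex and two weight-$n$ edges) costs only $\omega(1,1,\gamma)=2$ in the exponent.

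The second missing idea is where the fraction $\frac{1-\alpha}{k-1+\alpha}$ actually comes from. It is not produced by optimizing $\gamma$ near or above $\alpha$ (above $\alpha$ you lose control of $\omega(1,1,\gamma)$, and the paper works with $\gamma<\alpha$ throughout, letting $\gamma\to\alpha$ at the end). It comes from a cyclic symmetrization step that you omit entirely: tensoring together the $k$ cyclic rotations of $\Tens_{n,\gamma}(C_k)$ yields the \emph{balanced} cycle $\Tens_{N}(C_k)$ with $N=n^{k-1}\lfloor n^\gamma\rfloor$ at cost $\Oh(n^{k(k-1+\varepsilon)})$, whence
\[
\omega_k\;\le\;\frac{k(k-1)}{k-1+\alpha}\;=\;k-\alpha\Bigl(1+\frac{1-\alpha}{k-1+\alpha}\Bigr).
\]
Without this conversion from the unbalanced to the balanced cycle, your argument never bounds $\omega_k$ at all, and with it the claimed correction term falls out of the ratio of total cost to edge-weight product rather than from any single-variable optimization.
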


The idea of the proof is as follows. Start with the unbalanced triangle tensor $\langle n,n,\lfloor n^\alpha \rfloor \rangle$. On the graph level, we split a vertex, and insert a vertex with two edges:
\[
\begin{minipage}{0.8cm}
\begin{tikzpicture}[vertex/.style = {circle, fill, black, minimum width = 1.mm, inner sep=0pt}]
    \path[coordinate] (0,0)  coordinate(A)
                ++( 2*1*60+30:0.8cm) coordinate(B)
                ++( 2*2*60+30:0.8cm) coordinate(C);

	\draw [line width=0.2mm]  (A) node [vertex] {} -- node[xshift=1.2mm, yshift=1.2mm]{\smash{$\alpha$}} (B) node [vertex] {} -- (C) node [vertex] {} -- (A);
\end{tikzpicture}
\end{minipage}
\quad\leadsto\quad
\begin{minipage}{0.8cm}
\begin{tikzpicture}[vertex/.style = {circle, fill, black, minimum width = 1.mm, inner sep=0pt}]
    \path[coordinate] (0,0)  coordinate(A)
                ++( 2*1*180/5-18:0.7cm) coordinate(B1)
                ++( 2*2*180/5-18:0.7cm) coordinate(B)
                ++( 2*3*180/5-18:0.7cm) coordinate(C)
                ++( 2*4*180/5-18:0.7cm) coordinate(D);
	\draw [line width=0.2mm, dashed, gray] (A) -- node[xshift=-2mm, yshift=0mm]{$\alpha$} (B);

	\draw [line width=0.2mm] (D) node [vertex] {}  -- (A) node [vertex] {}  (B) node [vertex] {} -- node[xshift=-0.5mm, yshift=1.5mm]{\smash{$\alpha$}} (C) node [vertex] {} -- (D);

\end{tikzpicture}
\end{minipage}
\quad\leadsto\quad
\begin{minipage}{0.8cm}
\begin{tikzpicture}[vertex/.style = {circle, fill, black, minimum width = 1.mm, inner sep=0pt}]
    \path[coordinate] (0,0)  coordinate(A)
                ++( 2*1*180/5-18:0.7cm) coordinate(B1)
                ++( 2*2*180/5-18:0.7cm) coordinate(B)
                ++( 2*3*180/5-18:0.7cm) coordinate(C)
                ++( 2*4*180/5-18:0.7cm) coordinate(D);
	\draw [line width=0.2mm, dashed, gray] (A) -- node[xshift=-2mm, yshift=0mm]{$\alpha$} (B);
	\draw [line width=0.2mm] (D) node [vertex] {}  -- (A) node [vertex] {} -- (B1) node [vertex] {}  -- (B) node [vertex] {} -- node[xshift=-0.5mm, yshift=1.5mm]{\smash{$\alpha$}} (C) node [vertex] {} -- (D);
\end{tikzpicture}
\end{minipage}
\]
The crucial observation is that the total cost of splitting a vertex and inserting one vertex with the two appropriate edges is $\omega(\langle n, n, \floor{n^{\alpha}}\rangle)$ which is 2. Repeating this procedure $(k-1)/2$ times yields $\Tens_n(C_k)$ but with edges ``weighted'' by $\floor{n^\alpha},n, \ldots, n$ respectively, at cost $k-1$ in the exponent. To get an evenly weighted $\Tens_n(C_k)$ we symmetrise cyclically.

\begin{proof}
Let $0<\gamma<\alpha$. Let $\Tens_{n,\gamma}(C_\ell)$ be the cycle tensor with edges weighted by $\floor{n^\gamma},n,\ldots, n$ respectively,
\[
\Tens_{n,\gamma}(C_\ell) = \sum_{\mathclap{i\in [\floor{n^\gamma}] \times[n]^{\times(\ell-1)}}} (b_{i_1} \tightotimes b_{i_2}) \otimes (b_{i_2} \tightotimes b_{i_3}) \otimes \cdots \otimes (b_{i_\ell} \tightotimes b_{i_1}).
\]
We will show that $\rank(\Tens_{n,\gamma}(C_k)) = \Oh(n^{k-1+\varepsilon})$ for all $\varepsilon>0$ by induction on odd $k\geq 3$. For $k=3$, the statement is true by definition of $\alpha$. Suppose the statement holds for $k=\ell$. Let $\phi$ be the linear map 
$\mathbb C^{\lfloor n^{\gamma} \rfloor} \otimes \mathbb C^n \to (\mathbb
C^{\lfloor n^{\gamma} \rfloor} \otimes \mathbb C^n) \otimes (\mathbb C^n
\otimes \mathbb C^n)^{\otimes 2}$
defined on simple tensors by 
\[
u\otimes v \mapsto \sum_{j\in[n]^2} (u\tightotimes b_{j_1}) \otimes (b_{j_1} \tightotimes b_{j_2}) \otimes (b_{j_2}\tightotimes v),
\]
and let $\psi_\ell$ be the linear map 
$(\mathbb C^{\lfloor n^{\gamma} \rfloor} \otimes \mathbb C^n) \otimes (\mathbb
C^n \otimes \mathbb C^n)^{\otimes \ell - 2} \otimes (\mathbb C^n \otimes
\mathbb C^{\lfloor n^{\gamma} \rfloor}) \to (\mathbb C^{\lfloor n^{\gamma}
\rfloor} \otimes \mathbb C^n) \otimes (\mathbb C^n \otimes \mathbb
C^n)^{\otimes \ell} \otimes (\mathbb C^n \otimes \mathbb C^{\lfloor n^{\gamma}
\rfloor})$
that applies~$\phi$ at the first tensor leg. 
Then,
%
\[
\Tens_{n,\gamma}(C_{\ell+2}) = \psi_\ell(\Tens_{n,\gamma}(C_\ell)).
\]
Let $\varepsilon > 0$. There is a constant $c_\varepsilon\in \NN$ and a decomposition of $\Tens_{n,\gamma}(C_\ell)$ as a sum of at most $c_\varepsilon n^{(\ell-1) + \varepsilon}$ simple summands (Lemma\nobreakspace \ref {bigoh}).
Consider one simple summand $t^1\otimes \cdots \otimes t^\ell$ in this decomposition. We have $\rank_{\CC^{\floor{n^\gamma}}\otimes \CC^n}(t^1)\leq \floor{n^\gamma}$ and hence $\phi(t^1) \leq \Tens_{n,\gamma}(C_3)$. The rank of $\psi(t^1\otimes \cdots \otimes t^k)$ is therefore at most $d_\varepsilon n^{2 + \varepsilon}$ for some constant $d_\varepsilon\in \NN$. We conclude that the rank of $\psi(\Tens_{n,\gamma}(C_{\ell+2}))$ is at most $c_\varepsilon d_\varepsilon n^{\ell-1 + 2 + 2\varepsilon} = c_\varepsilon d_\varepsilon n^{\ell + 1 + 2\varepsilon}$, and thus $\rank(\Tens_{n,\gamma}(C_{\ell+2})) = \Oh(n^{\ell+1+\varepsilon})$ for any $\varepsilon>0$.


Symmetrizing $\Tens_{n,\gamma}(C_k)$ cyclically gives us a balanced cycle tensor, as follows: 
\[
\Tens_{n^{k-1}\floor{n^{\gamma}}}(C_k) \cong \bigotimes_{\pi} \pi  \cdot\Tens_{n,\gamma}(C_k),
\]
where $\pi$ goes over all powers of the cyclic permutation $(12\cdots k)$, and $\pi$ acts by permuting the tensor legs.
Let $\varepsilon > 0$ and let $\gamma < \alpha$. Then, by submultiplicativity of tensor rank,
\[
\rank(\Tens_{n^{k-1}\floor{n^{\gamma}}}(C_k)) \leq \rank(\Tens_{n,\gamma}(C_k))^k \leq c_{\varepsilon,\gamma}^k n^{(k-1+\varepsilon)k}.
\]
Then, by Lemma\nobreakspace \ref {bigoh},
\[
\omega(\Tens(C_k)) \leq \frac{\log_n(c_{\varepsilon, \gamma}^k n^{(k-1+\varepsilon)k})}{\log_n(n^{k-1}\floor{n^\gamma})} \leq \frac{\log_n c_{\varepsilon, \gamma}^k + (k-1+\varepsilon)k}{k-1+\gamma-o(1)}.
\]
Letting $n\to\infty$, $\varepsilon\to0$, $\gamma \to \alpha$ gives
\[
\omega(\Tens(C_k)) \leq \frac{k}{k-1+\alpha} (k-1) = k-\alpha \Bigl(1 + \frac{1-\alpha}{k-1+\alpha}\Bigr),
\]
finishing the proof.
%
\end{proof}

\begin{remark}
We can naturally define $\omega(\gamma_1, \ldots, \gamma_k)$ by extending the definition in \eqref{dual}. An interesting intermediate result in the above proof of Theorem\nobreakspace \ref {asymp} is that for any $k\geq3$ and any $0 < \gamma < \alpha$ we have $\omega(1,1, \ldots, 1, \gamma) = k-1$. The standard flattening argument implies that this bound is optimal. Also, the observation in Remark\nobreakspace \ref {localrank} applied to $\Tens_{n,\gamma}(C_k)$ implies that the decompositions achieving the exponent $k-1+\varepsilon$ must have close to maximal local ranks, and thus the surgery method cannot be improved by taking into account local rank information.
\end{remark}

Summarizing, the following table contains the best bounds on the exponent of odd cycles $\omega_k = \omega(\Tens(C_k))$ for some small odd $k$. From $k=11$ onwards, Theorem\nobreakspace \ref {asymp} gives the best upper bound. This bound converges to $k-\alpha$ when we let $k\to\infty$.

\begin{center}
\begin{tabular}{llll}
\toprule
$k$ & \multicolumn{2}{c}{$\omega_k$} & reference \\\cmidrule{2-3}
	& lower & upper &\\
\midrule
3  & 2 & 2.3728639 & \cite{le2014powers} \\
5  & 4 & 4.6031719 & \cite{buhrman2016nondeterministic} \\
7  & 6 & 6.6511249 & \cite{buhrman2016nondeterministic} \\
9  & 8 & 8.6715848 & Theorem\nobreakspace \ref {asymp} \\
11 & 10 & 10.676522 & Theorem\nobreakspace \ref {asymp} \\
13 & 12 & 12.679854 & Theorem\nobreakspace \ref {asymp} \\
\bottomrule
\end{tabular}
\end{center}


\subsection{Covering and distilling} In some cases, we have another method for obtaining upper bounds on $\omega(\Tens(G))$ which gives weaker results than the tensor surgery upper bounds above but which is conceptually easier.
%
The idea is to cover the graph $G$ with triangles, which cost $\omega$ each and use distillation to remove unwanted edges.

For example, for $k=5$, the distillation result \eqref{distillation2} says that asymptotically one copy of $\Tens(C_5)$ can be restricted to the tensor product of two copies of $\Tens(5) = \sum_{i\in\{0,1\}}b_i^{\otimes 5}$. Covering the complete graph $K_5$ with 10 triangles, gives, with subscripts denoting tensor leg positions,
\[
\Tens_{n^3}(K_5) \cong \bigotimes_{\substack{G\subseteq K_5:\\ G\cong C_3}} \Tens_n(C_3)_{V(G)} \otimes (1 \otimes \cdots \otimes 1)_{[k]\setminus V(G)},
\]
where the tensor product is over subgraphs $G$ of $K_5$ isomorphic to $C_3$. We can view $\Tens_{n^3}(K_5)$ as the tensor product of $\Tens_{n^3}(C_5)$ and a permuted copy of $\Tens_{n^3}(C_5)$.
Distilling a unit tensor $\Tens_{\Omega(n^{2\cdot 3 - \varepsilon})}(5)$ from one of these copies (Lemma\nobreakspace \ref {subomega}) gives
\[
\Tens_{n^3}(C_5)^{\oplus \Omega(n^{2\cdot 3 - \varepsilon})} \cong \Tens_{n^3}(C_5) \otimes \Tens_{\Omega(n^{2\cdot 3-\varepsilon})}(5) \leq \Tens_{n^3}(K_5) .
\]
By the asymptotic sum inequality for cycles (Proposition 27 in \cite{buhrman2016nondeterministic}) we obtain the inequality $\omega(\Tens(C_5)) \leq (10\omega-2\cdot 3)/3$ which is at most $5.90955$ by Le Gall's upper bound on $\omega$.

A variation on the above idea is to cover the cycle $C_k$ by unbalanced triangles with edge-multiplicities $(1,1,\alpha)$, which cost 2 each, and then distil a $k$-cycle with multiplicity $\alpha$. This yields $\omega(\Tens(C_k)) \leq k - \alpha$. 

\section{Tensor surgery on general graphs and hypergraphs}\label{hyper}

In this final section we want to illustrate tensor surgery on general graphs and hypergraphs. The first example shows that tensor surgery on a graph might involve absorbing a virtual hyperedge. The second example is an example of general hypergraph surgery.
We believe that the bounds in this section cannot be obtained by using only the covering and distilling technique mentioned at the end of the previous section. 

\subsection{The dome tensor}
In both examples we use the following hypergraph tensor, of which we will first establish some properties.

We define $\dome_{k,\ell}$ to be the following hypergraph on 4 vertices with multi-edges
\[
\begin{tikzpicture}[vertex/.style = {circle, fill, black, minimum width = 1.mm, inner sep=0pt}, novertex/.style = {minimum width = 0.mm, inner sep=0pt}, nofillvertex/.style = {draw, circle, black, minimum width = 1.mm, inner sep=0pt}]
	\node[nofillvertex] (0,0) (X) {};
	\node[vertex, below= 9mm of X] (Y) {};
	\coordinate[novertex, below= 0.1mm of Y] (Y1);
	\node[vertex, below right= 9mm of Y, xshift=2mm] (Z) {};
	\coordinate[novertex, left= 0.1mm of Z] (Z1);
	\node[vertex, below left= 9mm of Y, xshift=-2mm] (V) {};
	\coordinate[novertex, right= 0.1mm of V] (V1);

	\draw[line width=0.2mm] (X) -- node[right, xshift=-0.5mm, yshift=-1mm] {$\small\textsf{\upshape$\ell$}$} (Y);
	\draw[line width=0.2mm] (X) edge[bend left=10] node[right] {$\small\textsf{\upshape$\ell$}$} (Z);
	\draw[line width=0.2mm] (X) edge[bend right=10] node[left] {$\small\textsf{\upshape$\ell$}$} (V);

	\draw[rounded corners=1mm, fill=gray!10, line width=0.2mm] (Y1) -- (Z1) -- node[below] {$\small\textsf{\upshape$k$}$} (V1) -- cycle;
\end{tikzpicture}
\]
where $k$ and $\ell$ denote edge-multiplicities.

\begin{lemma}\label{domelemma}
We have $3\leq \omega(\Tens(\dome_{1,1})) \leq 3\omega / 2$.
\end{lemma}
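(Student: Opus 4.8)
The plan is as follows. For the lower bound $3 \leq \omega(\Tens(\dome_{1,1}))$, I would use the maximum-cut flattening bound from \eqref{graphparam}, namely $f(G) \leq \omega(\Tens(G))$, extended to hypergraphs in the obvious way (a hyperedge crosses a cut if it has vertices on both sides). In $\dome_{1,1}$ there are three ordinary edges joining the apex vertex (the white vertex $X$) to the three base vertices $Y,Z,V$, plus one hyperedge $\{Y,Z,V\}$. Placing $X$ on one side of the cut and $Y,Z,V$ on the other makes all three ordinary edges cross and also the hyperedge cross, giving a cut of size $4$; but I only need a cut of size $3$. Actually the cleanest choice: put $\{X\}$ versus $\{Y,Z,V\}$, which crosses the three $XY,XZ,XV$ edges (the base hyperedge also crosses, so the cut has size $4 \geq 3$). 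Hence $f(\dome_{1,1}) \geq 3$ and the flattening inequality $n^{3} \leq \rank(\Tens_n(\dome_{1,1}))$ yields $\omega(\Tens(\dome_{1,1})) \geq 3$ by Lemma \ref{bigoh}.

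For the upper bound $\omega(\Tens(\dome_{1,1})) \leq 3\omega/2$, the idea is to realize $\Tens_n(\dome_{1,1})$ as a restriction of a tensor product of matrix multiplication tensors, one for each ordinary edge of the dome. Concretely, I would cover the three ``spokes'' $XY$, $XZ$, $XV$ using triangles. Think of each spoke $Xw$ ($w \in \{Y,Z,V\}$) as living inside a triangle on the vertex set $\{X, w, \star\}$; but a better route exploits that the dome already looks like $K_4$ with one face replaced by a hyperedge. So: take $\Tens_{n}(C_3)$ on the three pairs of base vertices $\{Y,Z\}, \{Z,V\}, \{V,Y\}$ — wait, that is the wrong picture. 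The correct observation is that $\dome_{1,1}$ is obtained from three triangles $\{X,Y,Z\}$, $\{X,Z,V\}$, $\{X,V,Y\}$ sharing the vertex $X$, after which one uses the distillation/absorption result \eqref{distillation2} to convert the ``loop structure'' at the base into the single hyperedge $\{Y,Z,V\}$. More precisely, I would write $\Tens_{n^{?}}$ of the union of these three triangles as $\bigotimes$ of three copies of $\Tens_n(C_3)$, each costing $\omega$ asymptotically, then absorb the base edges $YZ$, $ZV$, $VY$ into the hyperedge $\{Y,Z,V\}$ via an SLOCC restriction (a GHZ-type map $\CC^n \otimes \CC^n \to \CC^n$ applied on appropriate legs). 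Three triangles cost $3\omega$ in the exponent but the base of each triangle gets ``halved'' when symmetrizing/absorbing, giving the factor $3\omega/2$; I would track the edge weights carefully so that after cyclic/base symmetrization the weights on all legs balance out and one obtains $\Tens_n(\dome_{1,1})$ up to local $\GL$-equivalence, at cost $3\omega/2$ in the exponent. Then Lemma \ref{bigoh} converts the $\Oh(n^{3\omega/2 + \varepsilon})$ rank bound into $\omega(\Tens(\dome_{1,1})) \leq 3\omega/2$.

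The main obstacle, and the step I would be most careful about, is the bookkeeping of edge-multiplicities in the covering-and-absorbing argument: making sure that the three triangles, after tensoring and after the absorption of the three base edges into the single hyperedge, combine to give exactly $\Tens_n(\dome_{1,1})$ (not $\dome_{2,2}$ or a differently-weighted dome), and that the exponent really comes out to $3\omega/2$ rather than $3\omega$ or $2\omega$. The factor $1/2$ must come from the fact that each base edge is shared between two of the three triangles (analogous to how covering $K_5$ by $10$ triangles and distilling gave the factor involving $10/3$ in the covering-and-distilling subsection), so I would phrase the construction so that the apex legs carry the full weight $n^{3}$ while each base vertex leg ends up with weight $n^{3/2}$ — or, more cleanly, take an even power to clear denominators and symmetrize. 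Verifying this identity at the level of explicit index sums, and invoking \eqref{distillation2} (or a direct GHZ restriction) in the right place, is the technical heart of the argument.
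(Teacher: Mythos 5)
Your approach is the same as the paper's: the lower bound via the flattening along the cut separating the apex from the base, and the upper bound by covering with the three triangles through the apex and then distilling the base triangle into the hyperedge at rate $2$ via \eqref{distillation}. Two concrete errors in your bookkeeping need fixing, and they sit exactly where you say the technical heart is. First, in the cut $\{X\}$ versus $\{Y,Z,V\}$ the base hyperedge does \emph{not} cross --- all three of its vertices lie on the same side --- so the cut has size $3$, not $4$; if it really were $4$ the flattening bound would give $\omega(\Tens(\dome_{1,1}))\geq 4$, contradicting the upper bound whenever $\omega<8/3$. Second, the factor $1/2$ does not come from base edges being shared between triangles: each base edge $YZ$, $ZV$, $VY$ lies in exactly \emph{one} of the triangles $XYZ$, $XZV$, $XVY$, while each apex edge lies in two. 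The union of the three triangles is therefore the dome with apex edges of multiplicity $2$ and the hyperedge replaced by an ordinary multiplicity-$1$ base triangle, at cost $3\omega$. The $1/2$ enters because that base triangle, being a copy of $\Tens_n(C_3)$, distills into a GHZ tensor of rank $n^{2-\varepsilon}$ (Lemma~\ref{subomega} with \eqref{distillation}), which matches the weight $n^2$ already carried by the doubled apex edges; hence one obtains $\Tens_{n^{2-\varepsilon}}(\dome_{1,1})$ with rank $\Oh(n^{3\omega+\varepsilon})$, i.e.\ exponent $3\omega/(2-\varepsilon)\to 3\omega/2$. In particular, your fallback of ``a direct GHZ restriction'' $\CC^n\otimes\CC^n\to\CC^n$ on the base would only produce a rank-$n$ hyperedge against weight-$n^2$ apex edges and thus only yields $\omega(\Tens(\dome_{1,1}))\leq 3\omega$; the asymptotic rate-$2$ distillation is essential to the factor $1/2$.
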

\begin{proof}
The lower bound $3\leq \omega(\Tens(\dome_{1,1}))$ is obtained by grouping the black vertices together and taking the corresponding flattening of $\Tens(\dome_{1,1})$.
For the upper bound, first observe that the exponent of the tensor $\Tens(G)$ corresponding to the graph $G$ given by
\[
\begin{tikzpicture}[vertex/.style = {circle, fill, black, minimum width = 1.mm, inner sep=0pt}, novertex/.style = {minimum width = 0.mm, inner sep=0pt}, nofillvertex/.style = {draw, circle, black, minimum width = 1.mm, inner sep=0pt}]
	\node[vertex] (0,0) (X) {};
	\node[vertex, below= 9mm of X] (Y) {};
	\coordinate[novertex, below= 0.1mm of Y] (Y1);
	\node[vertex, below right= 9mm of Y, xshift=2mm] (Z) {};
	\coordinate[novertex, left= 0.1mm of Z] (Z1);
	\node[vertex, below left= 9mm of Y, xshift=-2mm] (V) {};
	\coordinate[novertex, right= 0.1mm of V] (V1);

	\draw[line width=0.2mm] (X) -- node[right, xshift=-0.5mm, yshift=-1mm] {$\small\textsf{\upshape$2$}$} (Y);
	\draw[line width=0.2mm] (X) edge[bend left=10] node[right] {$\small\textsf{\upshape$2$}$} (Z);
	\draw[line width=0.2mm] (X) edge[bend right=10] node[left] {$\small\textsf{\upshape$2$}$} (V);
	\draw[line width=0.2mm] (Z) edge node[left] {} (V);
	\draw[line width=0.2mm] (Z) edge node[left] {} (Y);
	\draw[line width=0.2mm] (Y) edge node[left] {} (V);
	%
\end{tikzpicture}
\]
is at most $3\omega$, since $\Tens(G)$ can be obtained by combining three copies of~$\Tens(C_3)$. The distillation result \eqref{distillation} says that $\omega(\Tens(C_3), \Tens(3))^{-1} = 2$. This means that for any $\varepsilon>0$ we can restrict $\Tens_n(C_3)$ to $\Tens_{\Omega(n^{2-\varepsilon})}(T(3))$ (Lemma\nobreakspace \ref {subomega}). Applying this observation to the copy of $C_3$ that forms the base triangle in $G$ gives that
%
%
for any $\varepsilon>0$ the tensor $\Tens_{\Omega(n^{2-\varepsilon})}(\dome_{1,1})$ has rank $\Oh(n^{3\omega + \varepsilon})$. Therefore, $\omega(\Tens(\dome_{1,1})) = 3\omega$.
\end{proof}

\begin{lemma}\label{domelemma2}
$\omega(\Tens(\dome_{1,4})) = 12$.
\end{lemma}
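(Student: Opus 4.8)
The plan is to prove matching upper and lower bounds on $\omega(\Tens(\dome_{1,4}))$. For the lower bound I would use the flattening argument exactly as in Lemma~\ref{domelemma}: grouping the apex vertex (the one incident to the three $\ell$-edges, here $\ell = 4$) against the three base vertices, the flattening of $\Tens_n(\dome_{1,4})$ has rank $n^{3\cdot 4} = n^{12}$, since the apex vertex carries $4+4+4 = 12$ edge-copies. Hence $\omega(\Tens(\dome_{1,4})) \geq 12$. (Alternatively one can flatten any single base vertex against the rest: a base vertex is incident to $4$ copies of an $\ell$-edge and $2$ copies of the base $k=1$-triangle, giving $n^{6}$ only, so the apex cut is the one that matters.)

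For the upper bound, the idea is to realize $\dome_{1,4}$ by tensor surgery / a covering-type construction built from four copies of $\dome_{1,1}$ together with a base-triangle that can be distilled away cheaply, mimicking the structure of the proof of Lemma~\ref{domelemma}. Concretely: $\Tens(\dome_{1,4})$ is obtained by combining four copies of $\Tens(\dome_{1,1})$ on the same four vertices (this makes each $\ell$-edge multiplicity $4$ and the base $k$-edge multiplicity $4$), which gives a tensor whose exponent is at most $4 \cdot \omega(\Tens(\dome_{1,1})) = 4 \cdot 3\omega = 12\omega$. That is too weak; instead I would combine the surgery description of $\dome_{1,4}$ with the cheap distillation of the base hyperedge. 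The key point, as in Lemma~\ref{domelemma}, is that the GHZ-type base hyperedge of multiplicity $4$ on three vertices, $\Tens_n(3)^{\otimes 4} \cong \Tens_{n^4}(3)$, can be \emph{produced} (distilled) from a structure that costs only $2$ in the exponent per unit of base-edge weight via~\eqref{distillation2} applied with $k=3$, i.e. $\omega(\Tens(C_3),\Tens(3))^{-1} = 2$. So one covers the three $\ell=4$ "spokes" by cheap bilinear-type constructions and glues in a rank-$n^4$ GHZ on the base, distilled from three copies of $C_3$ at exponent cost $3\cdot 2 = 6$ for producing the base weight, while the three spokes each of weight $4$ contribute... — here I must be careful to balance the exact bookkeeping so that the total exponent comes out to $12$ and not larger.

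The main obstacle, and the part that needs the genuine tensor-surgery input rather than just covering-and-distilling, is getting the upper bound down to exactly $12$ (the paper explicitly says these bounds "cannot be obtained by using only the covering and distilling technique"). So I would set it up as a surgery: start from an unbalanced triangle tensor $\langle n, n, \lfloor n^\gamma\rfloor\rangle$ with $\gamma < \alpha$ (cost $2$ in the exponent, asymptotically optimal), split a vertex and insert structure to build the $\dome$ shape with the $\ell$-edges realized through the matrix-multiplication legs — exploiting that each local rank stays bounded by $n$ (resp. $\lfloor n^\gamma\rfloor$) so that $\phi$ applied to each simple summand costs at most a factor $\Oh(n^{2})$ — and repeat/symmetrize so that, after cyclically balancing as in the proof of Theorem~\ref{asymp}, the four $\ell$-edge copies and the base contribute a total of $12$ in the exponent, with the $\alpha$-slack washing out in the limit $\gamma \to \alpha$ precisely because $\omega(1,1,\gamma)=2$. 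Concretely I expect a bound of the shape $\omega(\Tens(\dome_{1,4})) \leq 12 + (\text{terms}\to 0 \text{ as } \gamma\to\alpha)$, which combined with the flattening lower bound of $12$ closes the gap; verifying that the spoke-and-base bookkeeping really sums to $12$ (and not, say, $12 + \text{something positive}$) is where the care is required.
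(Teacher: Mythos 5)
Your lower bound is exactly the paper's: flattening along the cut that separates the apex from the three base vertices crosses the three multiplicity-$4$ spokes and nothing else, giving a matrix of rank $n^{12}$ and hence $\omega(\Tens(\dome_{1,4}))\geq 12$.

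The upper bound is where your proposal has a genuine gap: you name the right ingredients (rectangular matrix multiplication made cheap by $\tfrac14<\alpha$, and distillation of the base triangle at rate $2$), but you never assemble them into a concrete construction, and you say yourself that the bookkeeping ``is where the care is required.'' The paper's argument is in fact a short covering-and-distilling computation in the style of Lemma~\ref{domelemma}, not a vertex-splitting surgery; the remark that covering-and-distilling does not suffice refers to the propositions that \emph{use} this lemma, not to the lemma itself. Concretely: place three copies of $\langle n, n^4, n^4\rangle$ on the four vertices, one for each pair of base vertices, with the dimension-$n$ leg on the base edge and the two dimension-$n^4$ legs on the spokes. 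Since $\tfrac14<\alpha$, Lemma~\ref{scaling} gives $\omega(1,4,4)=4\,\omega(\tfrac14,1,1)=8$, so the resulting multigraph --- spokes of multiplicity $8$ and a unit base triangle --- has exponent at most $3\cdot 8=24$. Now distill the base $C_3$ into $\Tens_{\Omega(n^{2-\varepsilon})}(3)$ via \eqref{distillation}. The result dominates $\Tens_{n^{2-\varepsilon}}(\dome_{1,4})$ (spokes of weight $n^{8-4\varepsilon}$, base hyperedge of weight $n^{2-\varepsilon}$) at rank cost $\Oh(n^{24+\varepsilon})$, i.e.\ $\omega(\Tens(\dome_{1,4})^{\otimes 2})\leq 24$ and hence $\omega(\Tens(\dome_{1,4}))\leq 12$. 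The point you were missing is that the factor of $2$ gained by distilling the base converts the cost $24$ of \emph{two} copies of the dome into $12$ per copy; your own accounting (spokes covered separately, base produced at cost $3\cdot 2=6$) does not close to $12$, and your alternative starting tensor $\langle n,n,\lfloor n^{\gamma}\rfloor\rangle$ with $\gamma<\alpha$ has the wrong aspect ratio: for $\dome_{1,4}$ the spoke weight must be the fourth power of the base weight, which is exactly what $\langle n, n^4, n^4\rangle$ together with $\tfrac14<\alpha$ provides.
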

\begin{proof}
The lower bound $3\cdot 4 \leq \omega(\Tens(\dome_{1,4}))$ is obtained by grouping the black vertices together and taking the corresponding flattening of $\Tens(\dome_{1,4})$.
We prove the upper bound by proving that $\omega(\Tens(\dome_{1,4})^{\otimes 2}) \leq 24$. We will do this by following the strategy of the proof of Lemma\nobreakspace \ref {domelemma}.
Recall that $\omega(\langle n, n^4, n^4 \rangle)$ equals $\exponent(1,4,4) = 4\exponent(\tfrac14,1,1)$ (Lemma\nobreakspace \ref {scaling}) and this number equals $4\cdot2 = 8$, since the dual exponent of matrix multiplication~$\alpha$ is at least $0.3029805$ which is strictly more than $\tfrac14$ (see \eqref{dualexponent}). Therefore, the exponent of the tensor $\Tens(G)$ corresponding to the graph $G$ given by
\[
\begin{tikzpicture}[vertex/.style = {circle, fill, black, minimum width = 1.mm, inner sep=0pt}, novertex/.style = {minimum width = 0.mm, inner sep=0pt}, nofillvertex/.style = {draw, circle, black, minimum width = 1.mm, inner sep=0pt}]
	\node[vertex] (0,0) (X) {};
	\node[vertex, below= 9mm of X] (Y) {};
	\coordinate[novertex, below= 0.1mm of Y] (Y1);
	\node[vertex, below right= 9mm of Y, xshift=2mm] (Z) {};
	\coordinate[novertex, left= 0.1mm of Z] (Z1);
	\node[vertex, below left= 9mm of Y, xshift=-2mm] (V) {};
	\coordinate[novertex, right= 0.1mm of V] (V1);

	\draw[line width=0.2mm] (X) -- node[right, xshift=-0.5mm, yshift=-1mm] {$\small\textsf{\upshape$8$}$} (Y);
	\draw[line width=0.2mm] (X) edge[bend left=10] node[right] {$\small\textsf{\upshape$8$}$} (Z);
	\draw[line width=0.2mm] (X) edge[bend right=10] node[left] {$\small\textsf{\upshape$8$}$} (V);
	\draw[line width=0.2mm] (Z) edge node[above,yshift=-0.5mm] {} (V);
	\draw[line width=0.2mm] (Z) edge node[right, xshift=-2mm, yshift=2mm] {} (Y);
	\draw[line width=0.2mm] (Y) edge node[left, xshift=2mm, yshift=2mm] {} (V);
	%
\end{tikzpicture}
\]
is at most $3\exponent(1,4,4) = 24$. For any $\varepsilon>0$ we can restrict $\Tens_{n}(C_3)$ to $\Tens_{\Omega(n^{2-\varepsilon})}(3)$ (Lemma\nobreakspace \ref {subomega}).
So for any $\varepsilon>0$ the tensor $\Tens_{\Omega(n^{2-\varepsilon})}(\dome_{1,4})$ has rank $\Oh(n^{24 + \varepsilon})$, which means that the inequality $\omega(\Tens(\dome_{1,4})^{\otimes 2}) \leq 24$ holds (Lemma\nobreakspace \ref {bigoh}).
%
\end{proof}

\subsection{Tensor surgery for graphs with hypergraph insertion}

The aim of the first example is to show how tensor surgery on a graph may involve the absorption of a virtual hyperedge.
Let $G$ be the multigraph 
\[
\begin{tikzpicture}[vertex/.style = {circle, fill, black, minimum width = 1.mm, inner sep=0pt}, nofillvertex/.style = {draw, circle, black, minimum width = 1.mm, inner sep=0pt}, novertex/.style = {minimum width = 0.mm, inner sep=0pt}]
	\node[vertex] (0,0) (6) {};
	\node[nofillvertex, below= 9mm of 6] (4) {};
	\node[nofillvertex, left= 9mm of 4, yshift=4mm] (5) {};
	\node[nofillvertex, right= 9mm of 4, yshift=4mm] (3) {};
	\node[vertex, below= 9mm of 4] (2) {};
	\node[nofillvertex, below= 9mm of 5] (1) {};

	\draw[line width=0.2mm] (6) edge node[right]{\small\textsf{\upshape{8}}} (4);
	\draw[line width=0.2mm] (6) edge[bend left=10] node[right, yshift=1mm]{\small\textsf{\upshape{8}}} (3);
	\draw[line width=0.2mm] (6) edge[bend right=10] node[left, yshift=1mm]{\small\textsf{\upshape{8}}} (5);
	\draw[line width=0.2mm] (5) edge node[left]{\small{\textsf{\upshape{1}}}} (1);
	\draw[line width=0.2mm] (4) edge node[left]{\small{\textsf{\upshape{1}}}} (2);
	\draw[line width=0.2mm] (3) edge node[below right=-1mm]{\small\textsf{\upshape{3}}} (2);
	\draw[line width=0.2mm] (2) edge node[below]{\small\textsf{\upshape{4}}} (1);

%
%
%
%
\end{tikzpicture}
\]
where the numbers denote edge-multiplicity.
Grouping the white vertices together and grouping the black vertices together shows that the size of a max-cut is at least 32. Therefore, $\omega(\Tens(G)) \geq 32$. On the other hand, one can cover the 5-cycle on the left at cost $\omega_5$ and the remaining edges at cost~1 each, which implies that $\omega(\Tens(G)) \leq \omega(\Tens(C_5)) + 28$. Therefore, by Theorem\nobreakspace \ref {easy} if $\omega=2$, then $\omega(\Tens(G)) = 32$. We will now prove this bound independently of~$\omega$ being 2. 

\begin{proposition}\label{dome}
$\omega(\Tens(G)) = 32$.
\end{proposition}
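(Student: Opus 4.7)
The lower bound $\omega(\Tens(G)) \geq 32$ follows from the maximum-cut flattening inequality \eqref{flatlb} applied to the partition $V_1 = \{2, 6\}$, $V_2 = \{1, 3, 4, 5\}$: every edge of $G$ except the single edge $5$--$1$ (which lies inside $V_2$) crosses the cut, and the crossing multiplicities sum to $3 \cdot 8 + 3 + 1 + 4 = 32$.

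For the matching upper bound, the plan is to realise $\Tens_n(G)$ by tensor surgery starting from the dome tensor $\Tens(\dome_{1,4})^{\otimes 2}$, whose exponent is at most $24$ by Lemma~\ref{domelemma2}. This starting tensor already carries the entire structure around vertex $6$ of $G$ (apex-base edges of multiplicity $8$ to $3, 4, 5$) but in addition carries an unwanted base $3$-hyperedge of multiplicity $2$ on $\{3, 4, 5\}$. Viewing the tensor leg at each base vertex as $\CC^{n^8} \otimes \CC^{n^2}$ (apex-base part tensored with hyperedge part), I would apply a linear surgery map
\[
\psi \colon (\CC^{n^2})^{\otimes 3} \to \CC^{n^3} \otimes \CC^{n} \otimes \CC^{n} \otimes \CC^{n^5} \otimes \CC^{n^8}
\]
simultaneously to the three hyperedge legs, chosen so that $\psi\bigl(\sum_j b_j^{\otimes 3}\bigr)$ equals the tree tensor $\Tens_n(T)$ of $G \setminus \{6\}$, built from the edges $5$--$1$ (mult~$1$), $1$--$2$ (mult~$4$), $2$--$4$ (mult~$1$), $2$--$3$ (mult~$3$). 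The map $\psi$ then simultaneously absorbs the virtual $3$-hyperedge supplied by the dome and inserts the two new vertices $1, 2$ together with the four tree edges, producing $\Tens_n(G)$.

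The crux of the rank analysis is to extract the one unit of saving over the naive total $24 + 9$. I would design $\psi$ so that on any rank-one input $b_a \otimes b_b \otimes b_c$ the image $\psi(b_a \otimes b_b \otimes b_c)$ has tensor rank at most $n^{8}$: intuitively, a simple tensor input fixes one index per base vertex, which through the structure of $\psi$ collapses one of the four tree edges (say $5$--$1$) by a factor of $n$ and drops the naive tree rank $n^9$ down to $n^{8}$. Combining this per-summand bound with a size-$\Oh(n^{24+\varepsilon})$ decomposition of $\Tens(\dome_{1,4})^{\otimes 2}$ whose summands are all simple on each of the three hyperedge legs then yields $\rank(\Tens_n(G)) = \Oh(n^{24+\varepsilon}) \cdot n^{8} = \Oh(n^{32+\varepsilon})$ and hence $\omega(\Tens(G)) \leq 32$ by Lemma~\ref{bigoh}. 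The main obstacle I foresee is exhibiting a dome decomposition of the required size that is simultaneously simple at each hyperedge leg; this will have to be argued along the lines of the local-rank analyses of Theorem~\ref{explicit} and Remark~\ref{localrank}, tracking how the triangle cover of Lemma~\ref{domelemma2} together with the distillation of $\Tens(C_3)$ into the mult-$2$ hyperedge leaves the base hyperedge factor in a rank-one form on each summand.
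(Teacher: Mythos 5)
Your lower bound is exactly the paper's (the cut $\{2,6\}$ versus $\{1,3,4,5\}$ gives $24+1+3+4=32$), but your upper bound runs the surgery in the opposite direction from the paper, and the step you flag as ``the main obstacle'' is not merely difficult --- it is impossible. You need a decomposition of $\Tens(\dome_{1,4})^{\otimes 2}$ (at size $n$: apex edges of weight $n^8$, hyperedge of weight $n^2$) with $\Oh(n^{24+\varepsilon})$ summands, each simple at every base leg with respect to the split $\CC^{n^8}\otimes\CC^{n^2}$. Such a decomposition is precisely a rank decomposition of the $7$-tensor obtained by refining each base leg into its apex-edge part and its hyperedge part, i.e.\ of $S\otimes H$ where $S$ is the weight-$n^8$ star on $\{6,3,4,5\}$ and $H=\Tens_{n^2}(3)$, with all seven legs kept separate. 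Flattening this $7$-tensor along the cut consisting of the apex leg together with the hyperedge part of vertex $3$, versus the remaining five legs, yields the Kronecker product of two full-rank matrices of ranks $n^{24}$ and $n^{2}$; hence the $7$-tensor has rank at least $n^{26}$, and no decomposition of size $\Oh(n^{24+\varepsilon})$ with the required simplicity exists. Worse, in any fully split decomposition the hyperedge parts are forced to be the diagonal terms $b_j^{\otimes 3}$, and linearity gives $\sum_{j\in[n^2]}\rank(\psi(b_j^{\otimes 3}))\geq\rank(\Tens_n(T))=n^{9}$ (the tree is bipartite), so the total is at least $n^{24}\cdot n^{9}=n^{33}$ --- no saving over the trivial $24+9$. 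The one unit you hope to save cannot be extracted when the hyperedge legs are simple; it must come from the hyperedge parts being genuinely entangled $3$-tensors, which is exactly what a per-summand analysis on three legs at once cannot control.

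The paper reverses the roles: the surgery acts on a \emph{single} leg of the cheap tensor, and the expensive dome is the per-summand cost. Start from $\langle n,n^4,n^4\rangle$, the triangle on $\{1,2,X\}$ with weights $1,4,4$, at cost $\omega(1,4,4)=4\omega(\tfrac14,1,1)=8$ since $\tfrac14<\alpha$. Splitting the one leg at the degree-$(1+4)$ vertex $X$ into the legs of vertices $5,4,3$ turns the triangle into exactly your tree $T$, and the split leg of each simple summand lies in $\CC^{n}\otimes\CC^{n}\otimes\CC^{n^3}$, hence automatically has rank at most $n^2$ as a $3$-tensor and is a restriction of $\Tens_{n^2}(3)$. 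Absorbing this virtual hyperedge while inserting vertex $6$ with its three weight-$n^8$ edges therefore costs at most the rank of the dome per summand, i.e.\ $\Oh(n^{24+\varepsilon})$ by Lemma~\ref{domelemma2}, giving $8+24=32$. The asymmetry is the whole point: a single-leg split has its local rank bounded by a dimension count, whereas your simultaneous three-leg split imposes a structural condition on the starting decomposition that provably fails.
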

\begin{proof}
It remains to show the upper bound. We start off with the rectangular matrix multiplication tensor $\langle n,n^4, n^4 \rangle$ at cost $\omega(1,4,4) = 4\exponent(\tfrac14,1,1) = 8$ (by Lemma\nobreakspace \ref {scaling} and since $\tfrac14< \alpha$), and, viewing it as a triangle graph
\[
\begin{minipage}{1.0cm}
\begin{tikzpicture}[vertex/.style = {circle, fill, black, minimum width = 1.mm, inner sep=0pt}, novertex/.style = {minimum width = 0.mm, inner sep=0pt}]
	\node[vertex] (0,0) (4) {};
	\node[vertex, below= 9mm of 4] (2) {};
	\node[vertex, below = 4.5mm of 4, xshift=-7mm] (1) {};

	\draw[line width=0.2mm] (4) edge node[above left=-1mm]{\small{\textsf{\upshape{1}}}} (1);
	\draw[line width=0.2mm] (4) edge node[right]{\small{\textsf{\upshape{4}}}} (2);
	\draw[line width=0.2mm] (2) edge node[below]{\small\textsf{\upshape{4}}} (1);

%
%
%
%
\end{tikzpicture}
\end{minipage}
\]
split up one of the low-dimension vertices into three vertices such that the resulting tensor corresponds to the following graph:
\[
\begin{minipage}{3.0cm}
\begin{tikzpicture}[vertex/.style = {circle, fill, black, minimum width = 1.mm, inner sep=0pt}, novertex/.style = {minimum width = 0.mm, inner sep=0pt}]
	\coordinate[novertex] (0,0) (6) {};
	\node[vertex, below= 9mm of 6] (4) {};
	\coordinate[above = 1mm of 4] (4a) {};
	\node[vertex, left= 9mm of 4, yshift=4mm] (5) {};
	\coordinate[above = 1mm of 5] (5a) {};
	\node[vertex, right= 9mm of 4, yshift=4mm] (3) {};
	\coordinate[above = 1mm of 3] (3a) {};
	\node[vertex, below= 9mm of 4] (2) {};
	\node[vertex, below= 9mm of 5] (1) {};

	\draw[line width=0.2mm] (5) edge node[left]{\small{\textsf{\upshape{1}}}} (1);
	\draw[line width=0.2mm] (4) edge node[left]{\small{\textsf{\upshape{1}}}} (2);
	\draw[line width=0.2mm] (3) edge node[below right=-1mm]{\small\textsf{\upshape{3}}} (2);
	\draw[line width=0.2mm] (2) edge node[below]{\small\textsf{\upshape{4}}} (1);
	\draw[line width=0.2mm, gray, dashed, rounded corners, fill=gray!10] (5a) -- (3a) -- (4a) -- cycle;

%
%
%
%
\end{tikzpicture}
\end{minipage}
\]
We then insert a new vertex and edges with multiplicity 8 as follows:
\[
\begin{minipage}{3.0cm}
\begin{tikzpicture}[vertex/.style = {circle, fill, black, minimum width = 1.mm, inner sep=0pt}, novertex/.style = {minimum width = 0.mm, inner sep=0pt}]
	\node[vertex] (0,0) (6) {};
	\node[vertex, below= 9mm of 6] (4) {};
	\node[vertex, left= 9mm of 4, yshift=4mm] (5) {};
	\node[vertex, right= 9mm of 4, yshift=4mm] (3) {};
	\node[vertex, below= 9mm of 4] (2) {};
	\node[vertex, below= 9mm of 5] (1) {};

	\coordinate[above = 0.5mm of 4] (4a) {};
	\coordinate[right = 0.5mm of 5] (5a) {};
	\coordinate[left = 0.5mm of 3] (3a) {};
	\draw[line width=0.2mm, gray, dashed, rounded corners, fill=gray!10] (5a) -- (3a) -- (4a) -- cycle;

	\draw[line width=0.2mm] (6) edge node[right, yshift=1mm]{\small\textsf{\upshape{8}}} (4);
	\draw[line width=0.2mm] (6) edge[bend left=10] node[right, yshift=1mm]{\small\textsf{\upshape{8}}} (3);
	\draw[line width=0.2mm] (6) edge[bend right=10] node[left, yshift=1mm]{\small\textsf{\upshape{8}}} (5);
	\draw[line width=0.2mm] (5) edge node[left]{\small{\textsf{\upshape{1}}}} (1);
	\draw[line width=0.2mm] (4) edge node[left]{\small{\textsf{\upshape{1}}}} (2);
	\draw[line width=0.2mm] (3) edge node[below right=-1mm]{\small\textsf{\upshape{3}}} (2);
	\draw[line width=0.2mm] (2) edge node[below]{\small\textsf{\upshape{4}}} (1);

%
%
%
%
\end{tikzpicture}
\end{minipage}
\]
Since the rank of a tensor in $\CC^n\otimes \CC^n\otimes \CC^{n^3}$ is at most $n^2$, the linear map which splits up the vertex and inserts the new vertex together with the appropriate edges with multiplicity 8 has cost at most the cost of creating the tensor corresponding to the hypergraph $\Tens(\dome_{1,4})^{\otimes 2}$ of Lemma\nobreakspace \ref {domelemma2}. Thus, $\omega(\Tens(G)) \leq 4\exponent(\tfrac14,1,1) + 2\exponent(\Tens(\dome_{1,4})) \leq 4\cdot2 + 2\cdot 12 = 32$.
\end{proof}

\subsection{Tensor surgery for hypergraphs}

In the second example we will be inserting a hypergraph into a hypergraph.
Define $H$ as the hypergraph 
\[
\begin{tikzpicture}[vertex/.style = {circle, fill, black, minimum width = 1.mm, inner sep=0pt}, nofillvertex/.style = {draw, circle, black, minimum width = 1.mm, inner sep=0pt}, novertex/.style = {minimum width = 0.mm, inner sep=0pt}]
	\node[nofillvertex] (0,0) (X) {};
	\node[vertex, below= 9mm of X] (Y) {};
	\coordinate[novertex, below= 0.1mm of Y] (Y1);
	\node[nofillvertex, below right= 9mm of Y, xshift=2mm] (Z) {};
	\coordinate[novertex, left= 0.1mm of Z] (Z1);
	\node[vertex, below left= 9mm of Y, xshift=-2mm] (V) {};
	\coordinate[novertex, right= 0.1mm of V] (V1);

	\draw[line width=0.2mm] (X) -- node[right, xshift=-0.5mm, yshift=-1mm] {} (Y);
	\draw[line width=0.2mm] (X) edge[bend right=10] node[left] {} (V);

	\draw[rounded corners=1mm, fill=gray!10, line width=0.2mm] (Y1) -- (Z1) --  (V1) -- cycle;
	\node[nofillvertex, xshift=18.25mm] (0,0) (Xa) {};
	\node[vertex, below= 9mm of Xa] (Ya) {};
	\coordinate[novertex, below= 0.1mm of Ya] (Y1a);
	\node[vertex, below right= 9mm of Ya, xshift=2mm] (Za) {};
	\coordinate[novertex, left= 0.1mm of Za] (Z1a);
	\coordinate[novertex, right= 0.1mm of Z] (V1a);

	\draw[line width=0.2mm] (Xa) -- node[right, xshift=-0.5mm, yshift=-1mm] {} (Ya);
	\draw[line width=0.2mm] (Xa) edge[bend left=10] node[right] {} (Za);

	\draw[rounded corners=1mm, fill=gray!10, line width=0.2mm] (Y1a) -- (Z1a) -- (V1a) -- cycle;

	\draw[line width=0.2mm] (X) -- (Xa);
\end{tikzpicture}
\]

\begin{proposition}\label{prop44}
We have $6 \leq \omega(\Tens(H)) \leq 6\omega / 2$.
\end{proposition}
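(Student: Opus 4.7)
The plan is to establish $6\leq \omega(\Tens(H))$ by a flattening lower bound, and $\omega(\Tens(H))\leq 3\omega$ by exhibiting a restriction $\Tens(\dome_{1,1})^{\otimes 2}\geq \Tens(H)$. The upper bound then follows from Lemma~\ref{domelemma} and the submultiplicativity $\omega(t\otimes s)\leq \omega(t)+\omega(s)$, giving $\omega(\Tens(H))\leq 2\,\omega(\Tens(\dome_{1,1}))\leq 3\omega$.

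For the lower bound, I would flatten $\Tens_n(H)$ along the bipartition $V_1=\{X,Z,X_a\}$, $V_2=\{Y,V,Y_a,Z_a\}$. The four binary edges $X$-$Y$, $X$-$V$, $X_a$-$Y_a$, $X_a$-$Z_a$ and both ternary hyperedges $\{Y,Z,V\}$, $\{Y_a,Z,Z_a\}$ cross the cut (the shared vertex $Z$ is in $V_1$ while the other two endpoints of each hyperedge lie in $V_2$), whereas the single edge $X$-$X_a$ is internal to $V_1$. Lemma~\ref{bigoh} and~\eqref{flatlb} then give $n^6\leq \rank(\Tens_n(H))$, so $\omega(\Tens(H))\geq 6$.

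For the upper bound, I would label two disjoint copies of $\dome_{1,1}$ on vertex sets $(X,Y,Z_1,V)$ and $(X_a,Y_a,Z_2,Z_a)$, and regroup their $Z_1$- and $Z_2$-legs into a single leg of the resulting $7$-tensor. At this merged leg I apply the linear map
\[
M\colon \CC^{n^2}\otimes \CC^{n^2}\to \CC^{n^2},\qquad (b_i\otimes b_j)\otimes (b_k\otimes b_l)\mapsto \delta_{i,k}\,b_j\otimes b_l,
\]
and at the other six legs the identity, up to a fixed permutation of the three edge-coordinates at each apex that reorders them to match $\Tens(H)$. A direct term-by-term expansion of both tensors verifies that the result equals $\Tens(H)$, giving the required restriction.

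The main trick is that $M$ simultaneously performs two surgical operations: it merges the two copies of the shared vertex $Z$, and it identifies the $X$-$Z_1$ edge-index with the $X_a$-$Z_2$ edge-index, so that these two now-redundant pendant indices are absorbed into the single new $X$-$X_a$ edge of $H$ that is absent from either copy of $\dome_{1,1}$. Without this double use of $M$, the $X$-$X_a$ edge would cost an extra additive $1$ in the exponent and break the bound.
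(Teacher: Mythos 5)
Your proof is correct and takes essentially the same approach as the paper: the lower bound is the identical white/black flattening giving $n^6$, and the upper bound rests on the same key inequality $\omega(\Tens(H))\leq 2\,\omega(\Tens(\dome_{1,1}))\leq 6\omega/2$ via Lemma~\ref{domelemma}. Your explicit contraction map $M$ on the two merged $Z$-legs is precisely a concrete realization of the paper's split-and-insert surgery step (splitting the apex of one dome and absorbing the resulting virtual edge into the second dome), so the two arguments coincide in substance.
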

\begin{proof}
The lower bound follows from grouping the white vertices together and grouping the black vertices together, and taking the corresponding flattening. For the upper bound, we start off with the dome $\dome_{1,1}$
\[
\begin{tikzpicture}[vertex/.style = {circle, fill, black, minimum width = 1.mm, inner sep=0pt}, novertex/.style = {minimum width = 0.mm, inner sep=0pt}]
	\node[vertex] (0,0) (X) {};
	\node[vertex, below= 9mm of X] (Y) {};
	\coordinate[novertex, below= 0.1mm of Y] (Y1);
	\node[vertex, below right= 9mm of Y, xshift=2mm] (Z) {};
	\coordinate[novertex, left= 0.1mm of Z] (Z1);
	\node[vertex, below left= 9mm of Y, xshift=-2mm] (V) {};
	\coordinate[novertex, right= 0.1mm of V] (V1);

	\draw[line width=0.2mm] (X) -- node[right, xshift=-0.5mm, yshift=-1mm] {} (Y);
	\draw[line width=0.2mm] (X) edge[bend left=10] node[right] {} (Z);
	\draw[line width=0.2mm] (X) edge[bend right=10] node[left] {} (V);

	\draw[rounded corners=1mm, fill=gray!10, line width=0.2mm] (Y1) -- (Z1) --  (V1) -- cycle;
\end{tikzpicture}
\]
We split one of the vertices in the hyperedge, as follows
\[
\begin{tikzpicture}[vertex/.style = {circle, fill, black, minimum width = 1.mm, inner sep=0pt}, novertex/.style = {minimum width = 0.mm, inner sep=0pt}]
	\node[vertex] (0,0) (X) {};
	\node[vertex, below= 9mm of X] (Y) {};
	\coordinate[novertex, below= 0.1mm of Y] (Y1);
	\node[vertex, below right= 9mm of Y, xshift=2mm] (Z) {};
	\coordinate[novertex, left= 0.1mm of Z] (Z1);
	\node[vertex, below left= 9mm of Y, xshift=-2mm] (V) {};
	\coordinate[novertex, right= 0.1mm of V] (V1);

	\draw[line width=0.2mm] (X) -- node[right, xshift=-0.5mm, yshift=-1mm] {} (Y);
	\draw[line width=0.2mm] (X) edge[bend right=10] node[left] {} (V);

	\draw[rounded corners=1mm, fill=gray!10, line width=0.2mm] (Y1) -- (Z1) -- (V1) -- cycle;
	\node[vertex, xshift=18.25mm] (0,0) (Xa) {};
	
	\draw[line width=0.2mm, dashed, gray] (Xa) edge[bend right=10] (Z);

	\draw[line width=0.2mm] (X) -- (Xa);
\end{tikzpicture}
\]
and insert the remaining vertices and edges as to obtain the goal tensor.
\[
\begin{tikzpicture}[vertex/.style = {circle, fill, black, minimum width = 1.mm, inner sep=0pt}, novertex/.style = {minimum width = 0.mm, inner sep=0pt}]
	\node[vertex] (0,0) (X) {};
	\node[vertex, below= 9mm of X] (Y) {};
	\coordinate[novertex, below= 0.1mm of Y] (Y1);
	\node[vertex, below right= 9mm of Y, xshift=2mm] (Z) {};
	\coordinate[novertex, left= 0.1mm of Z] (Z1);
	\node[vertex, below left= 9mm of Y, xshift=-2mm] (V) {};
	\coordinate[novertex, right= 0.1mm of V] (V1);

	\draw[line width=0.2mm] (X) -- node[right, xshift=-0.5mm, yshift=-1mm] {} (Y);
	\draw[line width=0.2mm] (X) edge[bend right=10] node[left] {} (V);

	\draw[rounded corners=1mm, fill=gray!10, line width=0.2mm] (Y1) -- (Z1) -- (V1) -- cycle;
	\node[vertex, xshift=18.25mm] (0,0) (Xa) {};
	\node[vertex, below= 9mm of Xa] (Ya) {};
	\coordinate[novertex, below= 0.1mm of Ya] (Y1a);
	\node[vertex, below right= 9mm of Ya, xshift=2mm] (Za) {};
	\coordinate[novertex, left= 0.1mm of Za] (Z1a);
	\coordinate[novertex, right= 0.1mm of Z] (V1a);

	\draw[line width=0.2mm] (Xa) -- node[right, xshift=-0.5mm, yshift=-1mm] {} (Ya);
	\draw[line width=0.2mm] (Xa) edge[bend left=10] node[right] {} (Za);

	\draw[rounded corners=1mm, fill=gray!10, line width=0.2mm] (Y1a) -- (Z1a) -- (V1a) -- cycle;

	\draw[line width=0.2mm, dashed, gray] (Xa) edge[bend right=10] (Z);
	\draw[line width=0.2mm] (X) -- (Xa);
\end{tikzpicture}
\]
We see that the combined cost of splitting the vertex and inserting the vertices and edges is at most $\omega(\Tens(\dome_{1,1}))$ which is at most $3\omega/2$ (Lemma\nobreakspace \ref {domelemma}). We conclude that the inquality $\omega(\Tens(G)) \leq 2\exponent(\Tens(\dome_{1,1})) \leq 6\omega/2$ holds.
\end{proof}

Of course, by replacing $\dome_{1,1}$ by $\dome_{1,4}$ one can obtain an exact result like in Proposition\nobreakspace \ref {dome}.

One of the reviewers observed the following.
The upper bound of Proposition\nobreakspace \ref{prop44} can also be obtained by covering by cycle 
graphs and distillation in a way similar to Lem. 4.1:
Number the vertices of $H$ left to right, top to bottom. Combine the 5-cycles
12463, 12765 and 3-cycles 135, 247, then distill 3-cycles 356 and 467 to 
obtain hyperedges. The resulting hypergraph is the hypergraph $H$ doubled.

\paragraph{Acknowledgements.} The authors thank Markus Bläser for helpful discussions. MC acknowledges financial support from the European Research Council (ERC Grant Agreement no.~337603), the Danish Council for Independent Research (Sapere Aude), and VILLUM FONDEN via the QMATH Centre of Excellence (Grant no.~10059).  JZ is supported by NWO through the research programme 617.023.116.

\raggedright
\bibliographystyle{alphaurlpp}
\bibliography{surgery_cc}                                             %

%
%
%
%
%
%

\end{document}